\documentclass[a4paper]{article}
\usepackage[english]{babel}
\usepackage[utf8]{inputenc}
\RequirePackage{amsmath,amsthm,amssymb,enumitem,graphicx,ifpdf}
\RequirePackage[left=1in,right=1in,top=1in,bottom=1in]{geometry}
\newtheorem{theorem}{Theorem}[subsection]

\newtheorem*{claim*}{Claim}
\newtheorem*{theorem*}{Theorem}

\newtheorem*{corollary*}{Corollary}

\newtheorem*{lemma*}{Lemma}
\theoremstyle{definition}

\newtheorem*{definition*}{Definition}

\newtheorem*{proposition*}{Proposition}

\newtheorem*{notation*}{Notation}

\newtheorem*{remark*}{Remark}

\numberwithin{equation}{section}
\newcommand{\R}{\mathbb{R}} 
\newcommand{\C}{\mathbb{C}}

\newcommand{\N}{\mathbb{N}}

\newcommand{\Li}{\text{Li}}

\newcommand{\Beta}{\text{B}}

\usepackage{cite}

\title{On Central Binomial Series Related to $\zeta(4).$}
\author{Vivek Kaushik}
\begin{document}
\maketitle

\begin{abstract}
In this paper, we prove two related central binomial series identities: $B(4)=\sum_{n \geq 0} \frac{\binom{2n}n}{2^{4n}(2n+1)^3}=\frac{7 \pi^3}{216}$ and 
$C(4)=\sum_{n \in \mathbb{N}} \frac{1}{n^4 \binom{2n}n}=\frac{17 \pi^4}{3240}.$ Both series resist all the standard approaches used to evaluate other well-known series. To prove the first series identity, we will evaluate a log-sine integral that is equal to $B(4).$ Evaluating this log-sine integral will lead us to computing closed forms of polylogarithms evaluated at certain complex exponentials. To prove the second identity, we will evaluate a double integral that is equal to $C(4).$ Evaluating this double integral will lead us to computing several polylogarithmic integrals, one of which has a closed form that is a linear combination of $B(4)$ and $C(4).$ After proving these series identities, we evaluate several challenging logarithmic and polylogarithmic integrals, whose evaluations involve surprising appearances of integral representations of $B(4)$ and $C(4).$ We also provide an insight into the generalization of a modern double integral proof of Euler's celebrated identity $\sum_{n \in \N} \frac{1}{n^2}=\frac{\pi^2}{6},$ in which we encounter an integral representation of $C(4).$ 
\end{abstract}

\section{Introduction}
Consider the series 
\begin{align}
    B(4) &= \sum_{n \geq 0} \frac{\binom{2n}n}{2^{4n}(2n+1)^3} \label{B(4)} \\
    C(4) &=\sum_{n \in \N} \frac{1}{n^4\binom{2n}n} \label{C(4)}.
\end{align}
Both series are known as central binomial series due to the appearance of the central binomial coefficient $\binom{2n}n$ in each summand. The values of each sum are
\begin{align}
B(4) &= \frac{7 \pi^3}{216}  \label{B(4) Identity} \\ 
C(4) &= \frac{17 \pi^4}{3240} \label{C(4) Identity},
\end{align} 
but it is not known who first discovered these results. van Der Poorten \cite{AVP} numerically conjectured \eqref{C(4) Identity}, but the literature suggests either Comtet \cite{LC} or Lewin \cite{LL} had discovered \eqref{C(4) Identity} earlier. 

Both $B(4)$ and $C(4)$ resist many of the standard techniques used to evaluate other series such as
$$\sum_{n \in \N} \frac{1}{n^{2k}}$$ for $k \in \N.$ Such techniques include multiple integration, Fourier Series, and the Calculus of Residues (see all of \cite{DK,BCK,BFY, JC, NE, VK}).  Many authors, however, have studied intimate connections between central binomial series and
log-sine integrals (see all of \cite{JC2009, JC2011, BS, NK, IJZ, ll1958, FB, Borwein}). In particular, $B(4)$ and $C(4)$ are equal to the integrals 
\begin{align}
    I &= \int_{0}^{\frac{\pi}{6}} \log^2(2 \sin(t)) \ dt \label{B(4) Log-Sine Integral} \\
    J &= \int_{0}^{\frac{\pi}{6}} 8t\log^2(2 \sin(t)) \ dt \label{C(4) Log-Sine Integral},
\end{align}
respectively. To evaluate either integral, one must rewrite $2 \sin(t)$ in terms of complex exponentials, expand out the integrand, and evaluate several logarithmic integrals. These integrals have closed forms in terms of polylogarithms, which are complex infinite series of the form
$$\Li_k(z)= \sum_{n \in \N} \frac{z^n}{n^k},$$
where $k \in \N$ and $z \in \C$ satisfies $|z| \leq 1.$ In particular, one will have to explicitly compute $\Li_k(e^{i\theta})$ for certain $\theta \in \R,$ extract real parts, and sum all such results together. This procedure is manageable for evaluating the integral $I,$ but laborious for evaluating the integral $J.$

In this paper, we will prove the central binomial series identities \eqref{B(4) Identity} and \eqref{C(4) Identity}. To prove \eqref{B(4) Identity}, we will first show that the series $B(4)$ is equal to $I.$ To do this, we will change variables $x=2 \sin(t)$ on $I,$ convert the transformed integrand into a binomial series, interchange sum and integral, and finally integrate term-by-term to recover $B(4).$ Then, we will show $I=7 \pi^3/216$ using the procedure described in the previous paragraph to evaluate $I.$

Next, we will prove \eqref{C(4) Identity} by showing that the series $C(4)$ is equal to the double integral
\begin{align}
K &= \int_{0}^{1}\int_{0}^{1-x} \frac{2 \log^2(x+y)}{1-xy} \ dy \ dx
\end{align}  
and evaluating $K$ directly. To show $C(4)$ is equal to $K,$ we will change variables $x=(u+v)/2, \ y=(u-v)/2$ on $K,$ integrate with respect to $v,$ convert the resulting integrand into a special binomial series, interchange sum and integral, and finally integrate term-by-term to recover $C(4).$ Then, we will show $K=17 \pi^4/3240$ by integrating the original double integral representation with respect to $y,$ which will result in us evaluating three different polylogarithmic integrals. Two of these integrals have closed forms that are rational multiples of $\pi^4,$ while the remaining integral has a closed form that is a linear combination of $B(4)$ and $C(4).$ Our proof of \eqref{C(4) Identity} does require elaborate integral calculations, but it is less laborious than the standard approach of evaluating the log-sine integral $J.$ 

After proving the central binomial series identities, we state and evaluate some difficult logarithmic and polylogarithmic integrals. In all of their evaluations, certain integral representations of $B(4)$ and $C(4)$ make surprising appearances in all of these integral evaluations. We conclude with a remark about the generalization of Apostol's famous double integration solution \cite{TA} to proving $$\sum_{n \in \N} \frac{1}{n^2}=\frac{\pi^2}6.$$ In particular, in mimicking Apostol's method to prove 
$$\sum_{n \in \N}\frac{1}{n^4}=\frac{\pi^4}{90},$$ we will encounter an integral representation of $C(4).$

\section{Preliminaries}
This section will be dedicated to recalling standard definitions and results from complex variables and analytic number theory that we will need to prove our main central binomial series identities.

\subsection{Complex Numbers}
Recall a \textbf{complex number} $z \in \C$ is of the form
\begin{align} \label{Complex Number Definition}
z=x+iy,
\end{align}
where $x,y \in \R$ and  $i=\sqrt{-1}.$ 
We call $x$ and $y$ the \textbf{real} and \textbf{imaginary} parts of $z,$ respectively, and write $x=\Re(z)$ and $y=\Im(z).$

If $z \in \C.$ we define its \textbf{modulus} to be the real number 
\begin{align}
|z|=\sqrt{\Re(z)^2+\Im(z)^2}, \label{Modulus Definition}
\end{align}
and if $z \neq 0,$ we define its \textbf{argument} to be the real number 
\begin{align}
\arg(z)=\tan^{-1}\left( \frac{\Im(z)}{\Re(z)} \right) \in [-\pi , \pi), \label{Argument Definition}
\end{align}
and its \textbf{complex logarithm} to be the complex number
\begin{align}
\log(z)=\ln(|z|)+ i \arg(z). \label{Complex Logarithm Definition}
\end{align}

We define a \textbf{complex exponential} to be a complex number of the form 
\begin{align}
e^{i \theta}=\cos(\theta)+i \sin(\theta), \label{Complex Exponential Definition}
\end{align}
where $\theta \in \R.$ 
With \eqref{Complex Exponential Definition}, we can easily deduce the following theorem.
\begin{theorem}[Euler's Formulas]\label{Euler's Formulas}
We have 
\begin{align}
    \cos(\theta) &= \Re(e^{i\theta}) = \frac{e^{i \theta}+e^{-i \theta}}{2} \label{Cosine Complex Exponential Definition} \\
    \sin(\theta)&= \Im(e^{i\theta})= \frac{e^{i \theta}-e^{-i \theta}}{2i} \label{Sine Complex Exponential Definition}.
\end{align}
\end{theorem}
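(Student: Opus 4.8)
The plan is to derive both identities directly from the definition of the complex exponential in \eqref{Complex Exponential Definition} together with the elementary parity properties of the real sine and cosine functions. First I would read off from $e^{i\theta}=\cos(\theta)+i\sin(\theta)$ that, since $\cos(\theta)$ and $\sin(\theta)$ are real numbers, the real and imaginary parts of $e^{i\theta}$ are exactly $\cos(\theta)$ and $\sin(\theta)$; this gives the first equality in each line, $\cos(\theta)=\Re(e^{i\theta})$ and $\sin(\theta)=\Im(e^{i\theta})$, with essentially no work.

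Next I would compute $e^{-i\theta}$ by substituting $-\theta$ into \eqref{Complex Exponential Definition}, obtaining $e^{-i\theta}=\cos(-\theta)+i\sin(-\theta)=\cos(\theta)-i\sin(\theta)$, where the second step uses that cosine is even and sine is odd. Adding the two expansions gives $e^{i\theta}+e^{-i\theta}=2\cos(\theta)$, and subtracting them gives $e^{i\theta}-e^{-i\theta}=2i\sin(\theta)$. Dividing by $2$ and by $2i$ respectively yields the remaining equalities $\cos(\theta)=\tfrac{1}{2}(e^{i\theta}+e^{-i\theta})$ and $\sin(\theta)=\tfrac{1}{2i}(e^{i\theta}-e^{-i\theta})$, completing the proof.

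There is no real obstacle here: the statement is an immediate consequence of the definition, and the only thing to be careful about is invoking the parity of $\sin$ and $\cos$ when passing from $e^{i\theta}$ to $e^{-i\theta}$. I would keep the argument to a few lines, since its only purpose is to record the exponential forms of $\sin$ and $\cos$ that will be used repeatedly later when rewriting $2\sin(t)$ in the log-sine integrals \eqref{B(4) Log-Sine Integral} and \eqref{C(4) Log-Sine Integral} in terms of complex exponentials.
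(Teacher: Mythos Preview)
Your proposal is correct and is exactly the standard derivation one expects here. The paper in fact does not write out a proof at all: it simply remarks that the theorem is ``easily deduced'' from the definition \eqref{Complex Exponential Definition} and moves on, so your few-line argument is precisely the kind of routine verification the author is tacitly assuming.
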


Using \eqref{Sine Complex Exponential Definition}, we can derive another representation of the function $\log(2\sin(t))$ for $t \in (0,\pi)$ that we will need to complete the proof of our first central binomial series identity.
\begin{theorem}[Log-Sine Expansion]\label{Log-Sine Expansion}
If $t \in (0, \pi),$ we have 
\begin{align}
\log(2 \sin(t)) &= \log(1-e^{2it})+ \left(\frac{\pi}{2}-t \right) i. \label{Log-Sine Expansion Formula}
\end{align}
In particular, 
\begin{align}
\Im(\log(2\sin(t)) &=0. \label{Imaginary Part Log-Sine}
\end{align}
\end{theorem}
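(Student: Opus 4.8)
The plan is to start from Euler's formula \eqref{Sine Complex Exponential Definition}, which gives $2\sin(t) = \frac{e^{it}-e^{-it}}{i}$ for any $t \in \R$. For $t \in (0,\pi)$ we want to factor out a complex exponential so that the remaining factor has a controlled argument. Writing $2\sin(t) = -i\,e^{it}(1-e^{-2it})$ is one natural grouping, but to land exactly on the stated right-hand side I would instead factor as $2\sin(t) = e^{it}\cdot\frac{1-e^{-2it}}{i}$, or better, use the conjugate-symmetric form. Concretely, I would verify the algebraic identity
\begin{align}
2\sin(t) &= i\,e^{-it}\bigl(1-e^{2it}\bigr), \label{plan-factor}
\end{align}
which follows immediately by expanding $i e^{-it}(1-e^{2it}) = i e^{-it} - i e^{it} = -i(e^{it}-e^{-it}) \cdot(-1)$ — i.e., a one-line check against \eqref{Sine Complex Exponential Definition}. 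Since $\sin(t) > 0$ on $(0,\pi)$, the left side is a positive real number, so taking the complex logarithm \eqref{Complex Logarithm Definition} of both sides is legitimate once I confirm the right side is the same positive real number (not merely equal in modulus).

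Next I would apply $\log$ to \eqref{plan-factor} and use additivity of the complex logarithm. The subtle point — and the main obstacle — is that $\log(zw) = \log z + \log w$ can fail by a multiple of $2\pi i$ for the principal branch defined in \eqref{Complex Logarithm Definition} with $\arg \in [-\pi,\pi)$. So rather than invoking additivity as a black box, I would compute the right-hand side of \eqref{Log-Sine Expansion Formula} directly and check it equals $\ln(2\sin t)$. Note $i = e^{i\pi/2}$ contributes argument $\pi/2$; the factor $e^{-it}$ contributes argument $-t$; and for $t \in (0,\pi)$ one has $2t \in (0,2\pi)$, so $1 - e^{2it}$ is a genuine complex number whose modulus and argument I can pin down: $|1-e^{2it}| = 2\sin(t)$ (using $|1-e^{i\phi}| = 2\sin(\phi/2)$ for $\phi \in (0,2\pi)$, valid since $2\sin t \geq 0$ there), and $\arg(1-e^{2it}) = t - \pi/2$ for $t \in (0,\pi)$ (this is the one computation needing a small case-free argument: $1 - e^{2it} = -2i\sin(t)e^{it} = 2\sin(t)\,e^{i(t-\pi/2)}$, and $t - \pi/2 \in (-\pi/2, \pi/2) \subset [-\pi,\pi)$, so this is exactly the principal argument). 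Hence $\log(1-e^{2it}) = \ln(2\sin t) + i(t-\pi/2)$.

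Adding the correction term then gives
\begin{align}
\log(1-e^{2it}) + \left(\tfrac{\pi}{2}-t\right)i &= \ln(2\sin t) + i(t - \tfrac{\pi}{2}) + i(\tfrac{\pi}{2} - t) = \ln(2\sin t), \nonumber
\end{align}
which is precisely $\log(2\sin(t))$ since $2\sin(t)$ is a positive real, establishing \eqref{Log-Sine Expansion Formula}. The claim \eqref{Imaginary Part Log-Sine} — that $\Im(\log(2\sin t)) = 0$ — is then immediate, since $2\sin(t)$ is a positive real number and its principal logarithm is therefore real; alternatively it reads off from the displayed cancellation. I expect the only place requiring genuine care is the argument computation for $1 - e^{2it}$: one must check it stays in the principal range $[-\pi,\pi)$ across the whole interval $t \in (0,\pi)$ and, just as importantly, handle the modulus identity $|1-e^{2it}| = 2\sin t$ with the correct (nonnegative) sign, since $\sin t > 0$ on this interval makes $2\sin t$ automatically the positive square root. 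Everything else is a direct substitution into the definitions \eqref{Complex Logarithm Definition} and \eqref{Sine Complex Exponential Definition}.
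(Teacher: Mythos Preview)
Your proof is correct and uses essentially the same factorization $2\sin(t) = i\,e^{-it}(1-e^{2it})$ as the paper. You are in fact more careful than the paper about the branch-cut issue for the principal logarithm: the paper simply splits $\log\bigl(i\,e^{-it}(1-e^{2it})\bigr) = \log(i) + \log(e^{-it}) + \log(1-e^{2it})$ without justification, whereas you explicitly verify $\arg(1-e^{2it}) = t - \pi/2 \in (-\pi/2,\pi/2)$ so that no $2\pi i$ correction is needed.
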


\begin{proof}
Rewriting $2\sin(t)$ according to the complex exponential formula from \eqref{Sine Complex Exponential Definition} and splitting the logarithm apart, we get 
\begin{align*}
\log(2 \sin(t)) &= \log \left(\frac{e^{it}-e^{-it}}{i} \right) \\
&= \log \left(ie^{-it}(1-e^{2it} \right) ) \\
&= \log(i) + \log(e^{-it}) + \log(1-e^{2it}) \\
&= i \left(\frac{\pi}{2}-t \right) + \log(1-e^{2it}).
\end{align*}

The second statement \eqref{Imaginary Part Log-Sine} is immediate from the fact that $2\sin(t)$ is positive for any $t \in (0,\pi),$ which implies $\log(2\sin(t))$ is real.

\end{proof}

\subsection{Polylogarithms}
The \textbf{polylogarithm} of order $k \in \N$ is defined to be the series
\begin{align}
    \Li_k(z) &= \sum_{n \in \N} \frac{z^n}{n^k}, \label{Polylogarithm Definition}
\end{align}
where $z \in \C$ satisfies $|z| \leq 1.$ 
In particular, when $z=1,$ we recover the well-known \textbf{Zeta function}, which is the series 
\begin{align}
    \zeta(k)&=\Li_k(1)=\sum_{n \in \N} \frac{1}{n^k}, \label{Zeta Function Definition}
\end{align}
and when $z=-1,$ we recover the well-known \textbf{Eta function}, which is the series 
\begin{align}
    \eta(k)&=\Li_k(-1)=\sum_{n \in \N} \frac{(-1)^n}{n^k}. \label{Eta Function Definition}
\end{align}

From \eqref{Polylogarithm Definition}, we can deduce the following functional identities. 
\begin{theorem}[Polylogarithmic Functional Identities]\label{Polylogarithmic Functional Identities}

We have 
\begin{align} 
    \frac{d}{dz} \Li_k(z) &= \frac{\Li_{k-1}(z)}{z} \label{Polylogarithmic Differentiation Formula} \\
    \Li_{k+1}(z) &= \int \frac{\Li_k(t)}{t} \ dt \label{Polylogarithmic Integration Formula} \\
\Li_k(-z) &= \frac{\Li_k(z^2)}{2^{k-1}}-\Li_k(z). \label{Polylogarithmic Negative Argument Formula}
\end{align}
\end{theorem}
The differentiation formula in \eqref{Polylogarithmic Differentiation Formula} and the integration formula in \eqref{Polylogarithmic Integration Formula} are ones we will be repeatedly using throughout this paper for elaborate integral calculations. 

The following theorem highlights a handy algebraic relation between $\eta(k)$ and $\zeta(k).$
\begin{theorem}[Eta and Zeta Function Relation]\label{Eta and Zeta Function Relation}
We have
\begin{align}\label{Eta(k) from Zeta(k)}
\eta(k) &= - \left(1- \frac{1}{2^{k-1}} \right) \zeta(k).
\end{align}
\end{theorem}
\begin{proof}
This follows from substituting $z=1$ into \eqref{Polylogarithmic Negative Argument Formula}. 
\end{proof}

The polylogarithm has some closed forms for certain complex $z$ in the cases $k=1$ and $2.$ We list the most important closed forms that we need for this paper. 

\begin{theorem}[Polylogarithmic Closed Forms]\label{Polylogarithmic Closed Forms}
We have 
\begin{align}
\Li_1(z) &= -\log(1-z), \quad z \neq 1 \label{Li_1(z) Closed Form} \\
\Li_2(1) &= \zeta(2) = \frac{\pi^2}{6} \label{Zeta(2) Closed Form} \\
\Li_2 \left(\frac{1}{2}\right)  &= \frac{\zeta(2)}{2}-\frac{\log^2(2)}{2} \label{Li_2(1/2) Closed Form} \\
\Li_4(1) &= \zeta(4) = \frac{\pi^4}{90}. \label{Zeta(4) Closed Form} 
\end{align}
\end{theorem}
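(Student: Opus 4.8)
The plan is to establish the four identities separately, leaning on the power-series definition \eqref{Polylogarithm Definition} and the functional identities of Theorem~\ref{Polylogarithmic Functional Identities}.

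First, for \eqref{Li_1(z) Closed Form}: the Maclaurin series of $-\log(1-z)$ is $\sum_{n \in \N} z^n/n$, which is precisely $\Li_1(z)$ by \eqref{Polylogarithm Definition}, so the identity holds for $|z|<1$. To reach the full punctured closed disk $\{\,z \in \C : |z| \le 1,\ z \neq 1\,\}$, I would appeal to Abel's theorem: the series $\sum_{n \in \N} z^n/n$ converges at every such boundary point, and $-\log(1-z)$ is continuous there, so the two sides, agreeing on the open disk, agree on its closure minus $z=1$.

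Next, \eqref{Zeta(2) Closed Form} is the classical Basel identity; a short self-contained route is to apply Parseval's theorem to the Fourier sine expansion of $f(x)=x$ on $(-\pi,\pi)$, whose coefficients are $b_n = 2(-1)^{n+1}/n$, giving $\frac{1}{\pi}\int_{-\pi}^{\pi} x^2\,dx = \sum_{n \in \N} b_n^2$, i.e. $\frac{2\pi^2}{3} = 4\,\zeta(2)$. The same method applied to $g(x)=x^2$ on $(-\pi,\pi)$, with constant term $\pi^2/3$ and cosine coefficients $a_n = 4(-1)^n/n^2$ for $n \ge 1$, yields $\frac{1}{\pi}\int_{-\pi}^{\pi} x^4\,dx = \frac{2\pi^4}{9} + \sum_{n \in \N} a_n^2$, that is $\frac{2\pi^4}{5} = \frac{2\pi^4}{9} + 16\,\zeta(4)$, which rearranges to \eqref{Zeta(4) Closed Form}. (Alternatively one may simply cite \cite{TA} for $\zeta(2)$ and derive $\zeta(4)$ from it.)

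Finally, for \eqref{Li_2(1/2) Closed Form}, I would prove Euler's reflection identity $\Li_2(z)+\Li_2(1-z) = \zeta(2) - \log(z)\log(1-z)$ on $(0,1)$: differentiating the left-hand side using \eqref{Polylogarithmic Differentiation Formula} with $k=2$ together with \eqref{Li_1(z) Closed Form} gives $-\log(1-z)/z + \log(z)/(1-z)$, which is exactly the derivative of $-\log(z)\log(1-z)$, so the difference of the two sides is constant; letting $z \to 1^-$ (where $\log(z)\log(1-z) \to 0$ and $\Li_2(0)=0$) identifies the constant as $\zeta(2)$. Setting $z = 1/2$ collapses the two dilogarithm terms into one, yielding $2\,\Li_2(1/2) = \zeta(2) - \log^2(1/2) = \zeta(2) - \log^2(2)$, which is \eqref{Li_2(1/2) Closed Form}. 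None of these steps is deep; the points demanding the most care are the boundary-convergence argument for \eqref{Li_1(z) Closed Form}, the vanishing of the boundary term when pinning down the constant in the reflection formula, and the fact that \eqref{Zeta(4) Closed Form} genuinely requires an external analytic input (Parseval's theorem, or equivalently the Fourier expansion of a Bernoulli polynomial) rather than pure algebra.
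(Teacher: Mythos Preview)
Your argument is correct. For \eqref{Li_1(z) Closed Form} both you and the paper identify the Maclaurin series, though the paper frames it as evaluating $\int_0^z \frac{dt}{1-t}$ two ways and does not address boundary points; your Abel-theorem remark is an improvement. For \eqref{Zeta(2) Closed Form} and \eqref{Zeta(4) Closed Form} the paper simply cites the literature, whereas you supply the Parseval computation; that is fine and self-contained. The real methodological difference is in \eqref{Li_2(1/2) Closed Form}: the paper integrates $\int_0^{1/2} -\log(1-t)/t\,dt$ by parts and then substitutes $t\mapsto 1-u$ to produce the equation $\Li_2(1/2) = -\log^2 2 + \zeta(2) - \Li_2(1/2)$ directly, while you instead establish the full reflection identity $\Li_2(z)+\Li_2(1-z)=\zeta(2)-\log z\,\log(1-z)$ by differentiating both sides and matching the constant, and only then specialize to $z=1/2$. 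Your route is slightly more general (it yields the reflection formula as a byproduct) and arguably cleaner, since it avoids the bookkeeping of the integration-by-parts boundary terms; the paper's route stays closer to the integral machinery used elsewhere in the article. Both approaches are standard and of comparable length.
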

\begin{proof}
To prove \eqref{Li_1(z) Closed Form},  we evaluate the integral
\begin{align}\label{Li_1(z) Integral}
\int_0^z \frac{1}{1-t} \ dt
\end{align}
in two ways. On one hand, directly integrating \eqref{Li_1(z) Integral} with elementary calculus gives
\begin{align*}
\int_0^z \frac{1}{1-t} \ dt &= -\log(1-z).
\end{align*}
On the other hand, we rewrite the integrand of \eqref{Li_1(z) Integral} as a geometric series, and we get
\begin{align}
\int_0^z \frac{1}{1-t} \ dt &= \int_0^z \sum_{n \in \N} t^{n-1} \ dt \nonumber \\
&= \sum_{n \in \N} \int_0^z  t^{n-1} \ dt \nonumber \\
&= \sum_{n \in \N} \frac{z^n}{n}=\Li_1(z). \nonumber
\end{align}
Hence, equating these two representations of \eqref{Li_1(z) Integral} yields \eqref{Li_1(z) Closed Form}.

To prove \eqref{Li_2(1/2) Closed Form}, we evaluate the integral 
\begin{align}\label{Li_2(1/2) Integral}
\int_{0}^{\frac{1}2} -\frac{\log(1-t)}{t} \ dt
\end{align}
in two ways. Notice we can rewrite the numerator of the integrand using  \eqref{Li_1(z) Closed Form}, and using the integration formula in \eqref{Polylogarithmic Integration Formula}, we get
\begin{align*}
\int_{0}^{\frac{1}2} -\frac{\log(1-t)}{t} \ dt &= \int_{0}^{\frac{1}2} \frac{\Li(t)}{t} \ dt=\Li_2\left(\frac{1}{2} \right).
\end{align*}
On the other hand, we integrate \eqref{Li_2(1/2) Integral} by parts. Let $u=-\log(1-t)$ and $dv=\frac{dt}{t}.$ Then, $v=\log(t)$ and $du=\frac{dt}{1-t},$ and we get 
\begin{align}
\int_{0}^{\frac{1}2} -\frac{\log(1-t)}{t} \ dt &= -\log^2\left(\frac{1}{2} \right) - \int_{0}^{\frac{1}{2}} \frac{\log(t)}{1-t} \ dt \nonumber \\
&= -\log^2\left(\frac{1}{2} \right) + \int_{0}^{\frac{1}{2}} \frac{\Li_1(1-t)}{1-t} \ dt \label{Li_2(1/2) Integration by Parts} \\ 
&= -\log^2\left(\frac{1}{2} \right) + \int_{\frac{1}{2}}^{1} \frac{\Li_1(u)}{u} \ du \label{Li_2(1/2) Substitution t=1-u} \\
&= -\log^2\left(2 \right) + \zeta(2) - \Li_2 \left( \frac{1}{2} \right)\nonumber,
\end{align}
where \eqref{Li_2(1/2) Substitution t=1-u} follows from substituting $t=1-u$ into the integral term in \eqref{Li_2(1/2) Integration by Parts}. Hence, equating these two representations of \eqref{Li_2(1/2) Integral} and rearranging terms yields \eqref{Li_2(1/2) Closed Form}.

The proofs of \eqref{Zeta(2) Closed Form} and \eqref{Zeta(4) Closed Form} are well-known results, for which one can consult any one of \cite{TA, VK,  DR, JH, LP, BFY, BCK, NE, DA'K}.
\end{proof}

\begin{remark*}
The identity \eqref{Zeta(2) Closed Form} is commonly known as the \textbf{Basel Problem} in Analytic Number Theory.
\end{remark*}
See \cite{LL} or \cite{GH} for an extensive list of polylogarithmic identities.

We will need to explicitly evaluate $\Li_k(z)$ when $z$ is a complex exponential. To do this, we identify the real and imaginary parts of $\Li_k(e^{i \theta}),$ which we call the \textbf{Clausen Functions}. In particular, the \textbf{Clausen Cosine} and \textbf{Clausen Sine} functions of order $k \in \N,$ are respectively defined to be the series
\begin{align}
C_k(\theta) &= \Re(\Li_k(e^{i\theta}))=\sum_{n \in \N} \frac{\cos(n \theta)}{n^k} \label{Clausen Cosine Definition} \\
S_k(\theta) &= \Im(\Li_k(e^{i\theta}))=\sum_{n \in \N} \frac{\sin(n \theta)}{n^k}, \label{Clausen Sine Definition}
\end{align}
where again, $\theta \in \R.$

We can explicitly evaluate these Clausen functions for certain values of $\theta$ very easily.

\begin{theorem}[Clausen Closed Formulas]\label{Clausen Closed Formulas}
For $t \in (0,\pi),$ we have 
\begin{align}
S_1(2t) &= \frac{\pi}{2}-t \label{S_1(2t) Closed Formula} \\
C_2(2t) &= \zeta(2)- \pi t +t^2 \label{C_2(2t) Closed Formula} \\
S_3(2t) &= \frac{\pi^2 t}{3}-\pi  t^2+\frac{2t^3}{3} \label{S_3(2t) Closed Formula} \\
C_4(2t) &= \zeta(4)-\frac{\pi ^2t^2}{3}+\frac{2 \pi t^3}{3}-\frac{t^4}{3}. \label{C_4(2t) Closed Formula} 
\end{align} 
\end{theorem}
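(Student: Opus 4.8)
The plan is to obtain all four formulas from a single source: the Fourier-series evaluations of the Bernoulli-polynomial type functions, or equivalently from repeated integration of the base identity $S_1(2t)=\sum_{n\ge 1}\frac{\sin(2nt)}{n}=\frac{\pi}{2}-t$ for $t\in(0,\pi)$. First I would establish $\eqref{S_1(2t) Closed Formula}$ directly: by $\eqref{Li_1(z) Closed Form}$ we have $\Li_1(e^{2it})=-\log(1-e^{2it})$, and by the Log-Sine Expansion $\eqref{Log-Sine Expansion Formula}$, $\log(1-e^{2it})=\log(2\sin t)-i(\tfrac{\pi}{2}-t)$, whose imaginary part is $-(\tfrac{\pi}{2}-t)$; hence $S_1(2t)=\Im\Li_1(e^{2it})=\tfrac{\pi}{2}-t$. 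This also cleanly pins down the constant of integration issue, since it gives an exact closed form rather than one valid up to periodicity.

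Next I would climb the ladder using the integration formula $\eqref{Polylogarithmic Integration Formula}$, $\Li_{k+1}(z)=\int \Li_k(t)/t\,dt$. Writing $z=e^{2it}$ so that $dz/z = 2i\,dt$, differentiation of $\Li_{k+1}(e^{2it})$ with respect to $t$ gives $\frac{d}{dt}\Li_{k+1}(e^{2it}) = 2i\,\Li_k(e^{2it})$, and separating real and imaginary parts yields the coupled system $C_{k+1}'(2t) = -2 S_k(2t)$ and $S_{k+1}'(2t) = 2 C_k(2t)$. Starting from $S_1(2t)=\tfrac{\pi}{2}-t$, integrate to get $C_2$ up to a constant, then integrate to get $S_3$ (note $S_2$ is not needed: one jumps from $C_2$ to $S_3$ via $S_3'(2t)=2C_2(2t)$), then integrate once more to get $C_4$. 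Each integration is an elementary polynomial antiderivative, so the only real work is fixing the four constants of integration. For the constants I would evaluate at convenient points: $C_2(0)=\zeta(2)$, $C_4(0)=\zeta(4)$ (these come from $\eqref{Polylogarithm Definition}$ with $\theta=0$, using $\eqref{Zeta(2) Closed Form}$ and $\eqref{Zeta(4) Closed Form}$), and $S_k(0)=0$ forces the constant in $S_3$; one should double-check $S_3$ by also noting $S_3(2t)$ is odd about $t=\pi/2$ or simply that its value at $t\to 0^+$ is $0$.

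The main obstacle, such as it is, is bookkeeping around the branch/periodicity of the Clausen functions: the relations $C_{k+1}'=-2S_k$, $S_{k+1}'=2C_k$ determine the functions only up to constants on the interval $(0,\pi)$, and for $C_2$ in particular one must be careful that the polynomial $\zeta(2)-\pi t+t^2$ really equals the Fourier series on all of $(0,\pi)$ and not merely up to an additive periodic correction — this is why anchoring at $t=0$ (where the series converges to $\zeta(2)$) is essential, together with continuity of $C_2$ on $(0,\pi)$ (legitimate since for $k\ge 2$ the series $\sum \cos(n\theta)/n^k$ converges absolutely and uniformly). Once $C_2$ is correctly identified, the remaining three identities follow by three elementary integrations and three constant evaluations, and I would close by remarking that each right-hand side is, up to scaling, the Bernoulli polynomial $B_k$ evaluated at $t/\pi$, which serves as an independent sanity check.
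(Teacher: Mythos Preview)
Your proposal is correct and follows essentially the same approach as the paper. The paper derives $S_1(2t)$ from the Log-Sine Expansion exactly as you do, and then obtains $C_2$, $S_3$, $C_4$ by computing $\int_0^t S_1(2u)\,du$, $\int_0^t C_2(2u)\,du$, $\int_0^t S_3(2u)\,du$ ``in two ways'' (once via term-by-term integration of the defining series, once via the polynomial already obtained); this is precisely your scheme $C_{k+1}'(2t)=-2S_k(2t)$, $S_{k+1}'(2t)=2C_k(2t)$ with constants fixed by the lower limit $t=0$, just written out series-side rather than packaged through $\frac{d}{dt}\Li_{k+1}(e^{2it})=2i\,\Li_k(e^{2it})$.
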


\begin{proof}
On one hand, recall from \eqref{Imaginary Part Log-Sine} that $ \Im (\log(2\sin(t)))=0.$ On the other hand, recalling our representation of $\log(2\sin(t))$ from \eqref{Log-Sine Expansion Formula} and the closed form for $\Li_1(z)$ from \eqref{Li_1(z) Closed Form}, we have
\begin{align*}
\Im (\log(2\sin(t))) &= \frac{\pi}{2}-t - \Im(\Li_1(e^{2it})) \\
&=\frac{\pi}{2}-t - S_1(2t).
\end{align*} 
Equating our two representations for $\Im (\log(2\sin(t)))$ and rearranging terms proves the first statement \eqref{S_1(2t) Closed Formula}. 

To obtain the second statement in \eqref{C_2(2t) Closed Formula}, we compute the integral 
\begin{align}\label{C_2(2t) Integral}
\int_0^t S_1(2u) \ du
\end{align}
in two ways. On one hand using the original definition of $S_1(2u)$ by substituting $\theta=2u$ and $k=1$ into \eqref{Clausen Sine Definition}, we have 
\begin{align*}
\int_0^t S_1(2u) \ du  &= \int_0^t \sum_{n \in \N} \frac{\sin(2nu)}{n} \ du \\
&= \sum_{n \in \N} \int_0^t  \frac{\sin(2nu)}{n} \ du \\
&= \sum_{n \in \N} \frac{1-\cos(2nt)}{2n^2} = \frac{\zeta(2)-C_2(2t)}{2}.
\end{align*}
On the other hand, using \eqref{S_1(2t) Closed Formula}, we have
\begin{align*}
\int_0^t S_1(2u) \ du  &= \int_0^t \frac{\pi}{2} - u \ du \\
&= \frac{\pi t}{2}-\frac{t^2}2.
\end{align*} Equating both these representations of \eqref{C_2(2t) Integral} and solving for $C_2(2t)$ will yield \eqref{C_2(2t) Closed Formula}.

Mimic the same argument from above by computing the integrals $\int_0^t C_2(2u) \ du$ and $\int_0^t S_3(2u) \ du$  in two ways to prove \eqref{S_3(2t) Closed Formula} and \eqref{C_4(2t) Closed Formula}, respectively.
\end{proof}

\begin{remark*}
We can in fact rederive $\zeta(2)=\pi^2/6$ by substituting $t=\pi/2$ into the formula for $C_2(2t)$ in \eqref{C_2(2t) Closed Formula}. The left hand side of \eqref{C_2(2t) Closed Formula} is $\eta(2),$ while the right hand side of \eqref{C_2(2t) Closed Formula} is $\zeta(2) - \pi^2/4.$ The result follows from rewriting $\eta(2)=-\zeta(2)/2$ by virtue of substituting $k=2$ into the algebraic formula for $\eta(k)$ in \eqref{Eta(k) from Zeta(k)} and rearranging terms. By mimicking this exact same argument by substituting $t=\pi/2$ into the formula for $C_4(2t)$ in \eqref{C_4(2t) Closed Formula}, we can recover $\zeta(4)=\pi^4/90.$ 
\end{remark*}

\subsection{Beta Functions and Binomial Coefficients}
We now recall the well-known Beta function and generalized binomial coefficients, which will appear all throughout the second central binomial series identity proof.

The \textbf{Beta function} is defined to be any one of the following two integrals
\begin{align} 
\Beta(u,v) &= \int_{0}^{1} x^{u-1} (1-x)^{v-1} \ dx= \int_{0}^{\infty} \frac{t^{u-1}}{(1+t)^{u+v}} \ dt \label{Beta Function Definitions} ,
\end{align}
where $u,v \in \C$ with $\Re(u),\Re(v)>0.$ Note that the integrals in \eqref{Beta Function Definitions} are equal under the change of variables $x=t/(1+t).$ 

We define the \textbf{generalized binomial coefficient} to be the number
\begin{align} \label{General Binomial Coefficient Definition}
\binom{\alpha}{k} &= \frac{\alpha(\alpha-1) \ \dots \ (\alpha-k+1)}{k!} ,
\end{align}
where $\alpha \in \C$ and $k \in \N \cup \lbrace 0 \rbrace.$
In the case $\alpha=2k,$ we call $\binom{2k}k$ the \textbf{central binomial coefficient}.

There are some identities specifically involving the central binomial coefficient which we will need.

\begin{theorem}[Generalized Binomial Coefficient Identities]\label{Generalized Binomial Coefficient Identities}
For $n \in \N \cup \lbrace 0 \rbrace,$ we have
\begin{align} 
\binom{-\frac{1}{2}}{n} &= (-1)^n \left(\frac{1}{2} \right)^{2n} \binom{2n}n \label{-1/2 Choose n formula} \\
\Beta \left(\frac{2n+1}2,\frac{2n+1}2 \right) &=\frac{\pi}{2^{4 n}} \binom{2 n}{n} \label{Beta(2n+1/2,2n+1/2) formula} \\
\Beta(n,n) &= \frac{2}{n\binom{2n}n} , \quad n >0. \label{Beta(n,n) Formula}
\end{align}
\end{theorem}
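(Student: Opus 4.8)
The plan is to prove the three identities \eqref{-1/2 Choose n formula}, \eqref{Beta(2n+1/2,2n+1/2) formula}, and \eqref{Beta(n,n) Formula} in that order, since the first is purely combinatorial and feeds naturally into the second via the Beta--Gamma relationship, while the third is an easy special case of the standard Beta evaluation. For \eqref{-1/2 Choose n formula}, I would start from the definition \eqref{General Binomial Coefficient Definition} with $\alpha=-\tfrac12$ and write out the numerator $(-\tfrac12)(-\tfrac32)\cdots(-\tfrac12-n+1)$. Factoring a $-1$ out of each of the $n$ factors gives $(-1)^n$ times $\tfrac12 \cdot \tfrac32 \cdots \tfrac{2n-1}2 = \tfrac{(2n-1)!!}{2^n}$. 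Then I multiply numerator and denominator by $2^n n! = (2n)!!$ to convert the double factorial into $(2n)!/(2^n n!)$, so that $\binom{-1/2}{n} = (-1)^n \tfrac{(2n)!}{2^{2n}(n!)^2} = (-1)^n 2^{-2n}\binom{2n}{n}$, which is exactly \eqref{-1/2 Choose n formula}.

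For \eqref{Beta(2n+1/2,2n+1/2) formula}, the cleanest route is the Gamma-function form of the Beta function, $\Beta(u,v)=\Gamma(u)\Gamma(v)/\Gamma(u+v)$, giving $\Beta\!\left(\tfrac{2n+1}2,\tfrac{2n+1}2\right)=\Gamma\!\left(n+\tfrac12\right)^2/\Gamma(2n+1)=\Gamma\!\left(n+\tfrac12\right)^2/(2n)!$. Using the standard value $\Gamma\!\left(n+\tfrac12\right) = \tfrac{(2n)!}{4^n n!}\sqrt{\pi}$ (itself provable by induction from $\Gamma(\tfrac12)=\sqrt\pi$ and the functional equation, or by relating it to \eqref{-1/2 Choose n formula}), one gets $\Gamma\!\left(n+\tfrac12\right)^2 = \pi \tfrac{((2n)!)^2}{16^n (n!)^2}$, and dividing by $(2n)!$ yields $\pi \tfrac{(2n)!}{16^n (n!)^2} = \tfrac{\pi}{2^{4n}}\binom{2n}{n}$, which is \eqref{Beta(2n+1/2,2n+1/2) formula}. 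If the paper prefers to avoid invoking the Gamma function explicitly, an alternative is to compute $\int_0^1 (x(1-x))^{(2n-1)/2}\,dx$ by the substitution $x=\tfrac{1+\sin\phi}{2}$, reducing it to $2^{-2n}\int_{-\pi/2}^{\pi/2}\cos^{2n+1}... $ wait, more precisely to a Wallis-type integral $\int_{-\pi/2}^{\pi/2}\cos^{2n}\phi\,d\phi$ whose value $\pi \binom{2n}{n}/4^n$ is classical. For \eqref{Beta(n,n) Formula}, I would likewise use $\Beta(n,n)=\Gamma(n)^2/\Gamma(2n) = ((n-1)!)^2/(2n-1)!$ and then multiply top and bottom by $2n$ to obtain $\tfrac{2n((n-1)!)^2}{(2n)!} = \tfrac{2(n!)^2}{n(2n)!} \cdot \tfrac{1}{1}$, i.e. $\tfrac{2}{n}\cdot\tfrac{(n!)^2}{(2n)!} = \tfrac{2}{n\binom{2n}{n}}$, giving \eqref{Beta(n,n) Formula}.

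The main obstacle, such as it is, is bookkeeping rather than depth: the double-factorial-to-factorial conversions in the first identity and the closed form for $\Gamma(n+\tfrac12)$ both require care with the powers of $2$, and it is easy to be off by a factor of $2^n$. I would therefore prove the $\Gamma(n+\tfrac12)$ formula as an explicit intermediate step (either by a one-line induction or by citing it), so that \eqref{Beta(2n+1/2,2n+1/2) formula} and \eqref{Beta(n,n) Formula} both follow transparently. Everything else is elementary manipulation of the definitions \eqref{General Binomial Coefficient Definition} and \eqref{Beta Function Definitions}, and no genuinely new idea is needed beyond the classical Beta--Gamma identity and the Legendre-duplication-type value of $\Gamma(n+\tfrac12)$.
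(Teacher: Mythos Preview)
Your argument is correct, but there is nothing to compare it against: the paper states this theorem and immediately moves on without supplying any proof. So your proposal actually fills a gap the paper leaves open.

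One remark on fit with the surrounding text. The paper introduces $\Beta(u,v)$ only through the integral definitions \eqref{Beta Function Definitions} and never defines or uses the Gamma function. Your main route through $\Beta(u,v)=\Gamma(u)\Gamma(v)/\Gamma(u+v)$ and the value $\Gamma\!\left(n+\tfrac12\right)=\tfrac{(2n)!}{4^n n!}\sqrt{\pi}$ is of course standard and correct, but it imports machinery the paper has not set up. The alternative you sketch for \eqref{Beta(2n+1/2,2n+1/2) formula}---substituting $x=\tfrac{1+\sin\phi}{2}$ in $\int_0^1 (x(1-x))^{n-1/2}\,dx$ to reduce to the Wallis integral $\int_{-\pi/2}^{\pi/2}\cos^{2n}\phi\,d\phi$---stays entirely within the paper's integral framework and would be the more internally consistent choice. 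Likewise, \eqref{Beta(n,n) Formula} can be obtained directly from $\int_0^1 x^{n-1}(1-x)^{n-1}\,dx$ by repeated integration by parts, avoiding $\Gamma$ altogether. Your proof of \eqref{-1/2 Choose n formula} via double factorials is exactly the natural argument and needs no adjustment.
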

 
We recall the well-known binomial series expansion, which we will use to prove our first central binomial series identity.
\begin{theorem}[Binomial Series Expansion]\label{Binomial Series Expansion}
If $\alpha \in \C$ and $x \in \R$ satisfies  $|x| < 1,$ we have
\begin{align}\label{Binomial Series Formula}
(1+x)^{\alpha} &= \sum_{n \geq 0} \binom{\alpha}n x^n.
\end{align} 
\end{theorem}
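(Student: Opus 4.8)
The plan is to recognize $f(x)=(1+x)^{\alpha}$ as the unique solution of a first-order linear differential equation and to check that the series on the right-hand side of \eqref{Binomial Series Formula} solves the same equation. Since $|x|<1$ forces $1+x\in(0,2)$, the power $(1+x)^{\alpha}=e^{\alpha \ln(1+x)}$ is unambiguously defined for every $\alpha\in\C$ and $x\in(-1,1)$, it is differentiable there with $f'(x)=\alpha(1+x)^{\alpha-1}$, and hence it satisfies
\begin{align}\label{BinomialODE}
(1+x)f'(x) = \alpha f(x), \qquad f(0)=1 .
\end{align}

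First I would analyze the series $g(x)=\sum_{n\geq 0}\binom{\alpha}{n}x^{n}$. If $\alpha\in\N\cup\{0\}$ the sum terminates and the statement is the elementary binomial theorem, so I may assume $\binom{\alpha}{n}\neq 0$ for all $n$. By the ratio test,
\begin{align}
\left|\frac{\binom{\alpha}{n+1}x^{n+1}}{\binom{\alpha}{n}x^{n}}\right| = \left|\frac{\alpha-n}{n+1}\right|\,|x| \longrightarrow |x| \qquad (n\to\infty),
\end{align}
so the radius of convergence equals $1$; consequently $g$ is defined and infinitely differentiable on $(-1,1)$, and may be differentiated term by term, giving $g'(x)=\sum_{n\geq 1}n\binom{\alpha}{n}x^{n-1}$.

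Next I would verify that $g$ satisfies the same equation \eqref{BinomialODE}. Expanding $(1+x)g'(x)=g'(x)+xg'(x)$, reindexing the first series via $m=n-1$, and comparing the coefficient of $x^{m}$, the whole computation collapses to the identity
\begin{align}
(m+1)\binom{\alpha}{m+1} + m\binom{\alpha}{m} = \alpha\binom{\alpha}{m},
\end{align}
which is immediate from \eqref{General Binomial Coefficient Definition} since $(m+1)\binom{\alpha}{m+1}=(\alpha-m)\binom{\alpha}{m}$. This yields $(1+x)g'(x)=\alpha g(x)$ on $(-1,1)$, and plainly $g(0)=\binom{\alpha}{0}=1$.

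Finally I would close the argument by uniqueness: putting $h(x)=(1+x)^{-\alpha}g(x)$ on $(-1,1)$, the product rule gives
\begin{align}
h'(x) = (1+x)^{-\alpha-1}\bigl((1+x)g'(x)-\alpha g(x)\bigr) = 0 ,
\end{align}
so $h$ is constant, and $h(0)=g(0)=1$ forces $g(x)=(1+x)^{\alpha}$ for all $|x|<1$, which is exactly \eqref{Binomial Series Formula}. The only step requiring genuine care is the term-by-term differentiation of $g$, which is licensed by the standard theorem on differentiating a power series inside its interval of convergence (established in the second paragraph); everything else is routine algebra. An alternative route would be to write Taylor's theorem for $(1+x)^{\alpha}$ and show that the Cauchy form of the remainder tends to $0$ on $(-1,1)$, but the differential-equation argument avoids any delicate remainder estimates, so I would expect that to be the cleanest path.
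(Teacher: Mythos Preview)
Your argument is correct and is the standard textbook proof of the binomial series via the initial-value problem $(1+x)y'=\alpha y$, $y(0)=1$; the ratio-test computation, the coefficient identity $(m+1)\binom{\alpha}{m+1}=(\alpha-m)\binom{\alpha}{m}$, and the uniqueness step with $h(x)=(1+x)^{-\alpha}g(x)$ are all fine.

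The paper, however, does not prove this theorem at all: it is simply recalled as a well-known result (alongside the Beta-function and generalized-binomial-coefficient identities) and used later without justification. So there is nothing to compare your approach against; you have supplied a complete and self-contained proof where the paper gives none.
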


The next binomial series representation involves the arcsine function, which we will use to prove our second central binomial series identity. 
\begin{theorem}[ArcSine Series Expansion]\label{ArcSine Series Expansion}
If $x \in \R$ satisfies $|x|<2,$ we have 
\begin{align}
\frac{\sin^{-1} \left(\frac{x}{2} \right)}{\sqrt{4-x^2}} &= \sum_{n \in \N} \frac{x^{2 n-1}}{2n \binom{2 n}{n}}. \label{ArcSine Squared Formula}
\end{align} 
\end{theorem}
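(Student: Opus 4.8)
The plan is to recognize the coefficient $\dfrac{1}{2n\binom{2n}{n}}$ as a quarter of a Beta-function value, sum the resulting series underneath an integral sign, and thereby reduce \eqref{ArcSine Squared Formula} to an elementary rational integral. First I would record convergence: by Stirling's formula $\binom{2n}{n}\sim 4^n/\sqrt{\pi n}$, so the general term on the right of \eqref{ArcSine Squared Formula} is $O\big(4^{-n}n^{-1/2}|x|^{2n}\big)$ and the series converges absolutely and locally uniformly for $|x|<2$; call its sum $f(x)$. Using \eqref{Beta(n,n) Formula} I would then write $\dfrac{1}{2n\binom{2n}{n}}=\dfrac14\Beta(n,n)=\dfrac14\int_0^1 t^{n-1}(1-t)^{n-1}\,dt$, so that
\[
f(x)=\frac{1}{4}\sum_{n\in\N}x^{2n-1}\int_0^1\big(t(1-t)\big)^{n-1}\,dt .
\]

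For fixed $x$ with $|x|<2$ and $t\in[0,1]$ one has $x^2t(1-t)\le x^2/4<1$, so the geometric series $\sum_{n\in\N}\big(x^2t(1-t)\big)^{n-1}$ converges uniformly on $[0,1]$; this is the justification that lets me interchange sum and integral, and summing the geometric series gives
\[
f(x)=\frac{x}{4}\int_0^1\frac{dt}{1-x^2t+x^2t^2} .
\]
I would evaluate this by completing the square, $1-x^2t+x^2t^2=x^2\big(t-\tfrac12\big)^2+\big(1-\tfrac{x^2}{4}\big)$, and substituting $u=t-\tfrac12$ to obtain an arctangent:
\[
f(x)=\frac{x}{4}\cdot\frac{2}{x\sqrt{1-x^2/4}}\arctan\!\left(\frac{x/2}{\sqrt{1-x^2/4}}\right).
\]
The final step is the identity $\arctan\!\big(s/\sqrt{1-s^2}\big)=\arcsin(s)$ for $|s|<1$, applied with $s=x/2$, together with $\sqrt{1-x^2/4}=\tfrac12\sqrt{4-x^2}$; these collapse the expression to $\dfrac{\arcsin(x/2)}{\sqrt{4-x^2}}$, as claimed, and the case $x=0$ is immediate since both sides vanish.

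The main obstacle — or at least the only place requiring genuine care — is the passage from the power series to the integral: one must check that the inner geometric series is uniformly convergent on $[0,1]$ exactly when $|x|<2$ (so that the resulting identity holds on the full stated range), and one must observe that the apparent singularity at $x=0$ is removable, since $f$ is an odd power series whose lowest term is $x/4$. A completely parallel alternative would be to verify that both sides satisfy the first-order linear ODE $(4-x^2)y'-xy=1$ with $y(0)=0$ — for the series side this amounts to the constant-term check $4\cdot\tfrac14\cdot a_1=1$ and the recursion $a_{n+1}=\dfrac{n}{2(2n+1)}a_n$ for $a_n=\dfrac{1}{2n\binom{2n}{n}}$, which follows from $\binom{2n+2}{n+1}=\dfrac{2(2n+1)}{n+1}\binom{2n}{n}$ — and then invoke uniqueness of solutions. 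I would still prefer the Beta-integral route, since the paper has just assembled precisely that machinery.
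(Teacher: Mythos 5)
Your proposal is correct and is essentially the paper's own argument run in the opposite direction: the paper evaluates $\int_0^1 \frac{x}{4-4x^2(y-y^2)}\,dy$ once by completing the square and once by geometric-series expansion plus $\Beta(n,n)=\frac{2}{n\binom{2n}{n}}$, while you start from the series, insert the same Beta integral, interchange (your uniform-convergence justification is fine, as is the paper's monotone-convergence one), and arrive at the identical rational integral and arctangent-to-arcsine step. No substantive difference beyond that reorganization (and a harmless slip in the constant-term check of your unused ODE alternative, where the condition is simply $4a_1=1$).
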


\begin{proof}
We evaluate the integral
\begin{align} \label{ArcSine Original Integral}
\int_{0}^{1} \frac{x}{4-4x^2(y-y^2)} \ dy
\end{align}
in two ways.

On one hand, we can complete the square in the denominator of the integrand and evaluate \eqref{ArcSine Original Integral} directly as follows:
\begin{align}
\int_{0}^{1} \frac{x}{4-4x^2(y-y^2)} \ dy &= \int_{0}^{1} \frac{1}{4x \left(\frac{4-x^2}{4 x^2}+\left(y-\frac{1}{2} \right)^2 \right)} \ dy \label{Arcsine Complete the Square} \\
&= \int_{-\frac{1}{2}}^{\frac{1}{2}} \frac{1}{4x \left(\frac{4-x^2}{4 x^2}+u^2 \right)} \ du \label{Arcsine Substitute u=y-1/2} \\
&= \frac{\tan ^{-1}\left(\frac{x}{\sqrt{4-x^2}}\right)}{\sqrt{4-x^2}}= \frac{\sin ^{-1}\left(\frac{x}{2}\right)}{\sqrt{4-x^2}} \nonumber, 
\end{align}
where we substituted $u=y-1/2$ into the right hand side of \eqref{Arcsine Complete the Square} to get \eqref{Arcsine Substitute u=y-1/2}. On the other hand, we evaluate \eqref{ArcSine Original Integral} by rewriting the integrand as a geometric series, in which we get
\begin{align}
\int_{0}^{1} \frac{x}{4-4x^2(y-y^2)} \ dy &= \int_0^1  \sum_{n \in \N} \frac{x^{2n-1} y^{n-1} (1-y)^{n-1}}4  \ dy \nonumber \\
&= \sum_{n \in \N} \int_0^1 \frac{x^{2n-1} y^{n-1} (1-y)^{n-1}}4 \ dy \label{ArcSine Monotone Convergence Theorem} \\
&= \sum_{n \in \N} \frac{x^{2n-1} \Beta(n,n)}4  =  \sum_{n \in \N} \frac{x^{2n-1}}{2n \binom{2n}n} \label{ArcSine Beta(n,n) Formula},
\end{align}
where the interchanging of sum and integral in \eqref{ArcSine Monotone Convergence Theorem} is permitted by the Monotone Convergence theorem, and \eqref{ArcSine Beta(n,n) Formula} follows from simplifying the summand using the formula for $\Beta(n,n)$ from \eqref{Beta(n,n) Formula}. Thus, equating these two representations for \eqref{ArcSine Original Integral} gives the desired result.
\end{proof}

\section{Evaluating $B(4)$ with a Log-Sine Integral}
We are now ready to prove our first identity \eqref{B(4) Identity}, namely
\begin{align*}
B(4) &= \sum_{n \geq 0} \frac{\binom{2n}n}{2^{4n} (2n+1)^3} = \frac{7 \pi^3}{216}
\end{align*}
using the log-sine integral 
\begin{align} \label{I Original Definition}
I &= \int_{0}^{\frac{\pi}{6}} \log^2(2\sin(t)) \ dt. 
\end{align}

We first establish that $I=B(4).$ 
\begin{theorem}
We have $I=B(4).$ 
\end{theorem}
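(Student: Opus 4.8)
The plan is to follow the route sketched in the introduction: substitute $x=2\sin t$, expand the resulting weight as a central‑binomial series, and integrate term by term. First I would put $x=2\sin t$, so that $dx = 2\cos t\,dt$; since $t$ ranges over $(0,\pi/6)$ we have $\cos t>0$ and $\cos t = \tfrac12\sqrt{4-x^2}$, and the limits become $x=0$ and $x=1$. Hence
\[
I = \int_0^1 \frac{\log^2 x}{\sqrt{4-x^2}}\,dx .
\]

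Next I would expand the weight. Writing $1/\sqrt{4-x^2} = \tfrac12\,(1-x^2/4)^{-1/2}$ and applying the Binomial Series Expansion (Theorem~\ref{Binomial Series Expansion}) with $\alpha=-\tfrac12$ and argument $-x^2/4$ (valid for $|x|<2$), then rewriting $\binom{-1/2}{n}(-1)^n$ via \eqref{-1/2 Choose n formula}, gives
\[
\frac{1}{\sqrt{4-x^2}} = \sum_{n\geq 0} \frac{\binom{2n}{n}}{2^{4n}}\,x^{2n}, \qquad |x|<2 .
\]
I would then insert this into the integral and swap the sum with the integral. Because $\log^2 x \ge 0$ on $(0,1)$ and each term $\frac{\binom{2n}{n}}{2^{4n}}x^{2n}\log^2 x$ is nonnegative there, the interchange is justified by the Monotone Convergence theorem applied to the partial sums, yielding
\[
I = \sum_{n\geq 0} \frac{\binom{2n}{n}}{2^{4n}} \int_0^1 x^{2n}\log^2 x\,dx .
\]
Finally I would evaluate the elementary moment $\int_0^1 x^{s}\log^2 x\,dx = 2/(s+1)^3$ (e.g. by differentiating $\int_0^1 x^s\,dx = 1/(s+1)$ twice in $s$, or by the substitution $x=e^{-u}$), specialize to $s=2n$, and read off $I = \sum_{n\geq 0} \frac{\binom{2n}{n}}{2^{4n}(2n+1)^3} = B(4)$.

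The only genuinely delicate point is the term‑by‑term integration. Two things need checking: that the integrand is integrable near $x=0$, which holds since $\log^2 x$ is; and that the series representation of the weight is valid throughout $[0,1]$, which is clear because the radius of convergence in the variable $x$ is $2$, so no boundary subtlety arises at $x=1$. Given these, Monotone Convergence handles the interchange with no further estimates, and the rest is routine calculus.
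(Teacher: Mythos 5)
Your argument is essentially the paper's own proof: the substitution $x=2\sin t$, the binomial expansion of $(1-x^2/4)^{-1/2}$ with $\alpha=-\tfrac12$ rewritten via \eqref{-1/2 Choose n formula}, Monotone Convergence for the interchange, and the elementary moment $\int_0^1 x^{2n}\log^2 x\,dx=\tfrac{2}{(2n+1)^3}$, so the approach is correct. One bookkeeping slip: your displayed expansion should read $\frac{1}{\sqrt{4-x^2}}=\frac12\sum_{n\geq 0}\frac{\binom{2n}{n}}{2^{4n}}x^{2n}$ (you state the factor $\tfrac12$ in words but drop it from the series), and this $\tfrac12$ is exactly what cancels the $2$ in the moment; as literally written, your intermediate displays would give $I=2B(4)$, though the intended conclusion $I=B(4)$ is of course what the corrected chain yields.
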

\begin{proof}
Substituting $x= 2\sin(t)$ into \eqref{I Original Definition}, we get 
\begin{align}
I &= \int_{0}^{1} \frac{\log^2(x)}{2\sqrt{1-\frac{x^2}4}} \ dx \label{I COV} \\
&= \int_{0}^{1} \sum_{n \geq 0} \binom{-\frac{1}{2}}n \left(-\frac{1}{4} \right)^n \frac{x^{2n} \log^2(x)}2 \ dx \label{I COV Binomial Series} \\
&=  \sum_{n \geq 0} \left(\frac{1}{4} \right)^{2n} \binom{-\frac{1}{2}}n  \int_{0}^{1}  \frac{x^{2n} \log^2(x)}2 \ dx\label{I COV Monotone Convergence Theorem} \\
&=\sum_{n \geq 0} \left(\frac{1}{4} \right)^{2n} \binom{-\frac{1}{2}}n  \frac{1}{(2n+1)^3} \label{I COV Integration By Parts} \\
&=\sum_{n \geq 0} \left(\frac{1}{2} \right)^{4n} \binom{2n}n \frac{1}{(2n+1)^3}=B(4), \label{I -1/2 Choose n}
\end{align}
where \eqref{I COV Binomial Series} follows from rewriting the integrand \eqref{I COV} using the binomial series expansion formula in \eqref{Binomial Series Formula} and simplifying the resulting summand, and the interchanging of sum and integral in \eqref{I COV Monotone Convergence Theorem} is permitted by the Monotone Convergence Theorem. Finally, \eqref{I COV Integration By Parts} follows evaluating the integral inside the summand of \eqref{I COV Monotone Convergence Theorem} with repeated integration by parts, and \eqref{I -1/2 Choose n} follows from rewriting and simplifying the summand with the formula for $\binom{-1/2}n$ from \eqref{-1/2 Choose n formula}. 
\end{proof}

We now evaluate $I$ directly. Using our formula for $\log(2\sin(t))$ from \eqref{Log-Sine Expansion Formula} to expand the integrand of \eqref{I Original Definition} and using linearity of the integral, we get
\begin{align}
I &= \int_{0}^{\frac{\pi}{6}}  \left(\log(1-e^{2it})+ \left(\frac{\pi}{2}-t \right) i \right)^2 \ dt \nonumber \\
&=  \int_{0}^{\frac{\pi}{6}} -\left(\frac{\pi}{2}-t \right)^2 \ dt + \int_{0}^{\frac{\pi}{6}} i (\pi-2t) \log(1-e^{2it}) \ dt  + \int_{0}^{\frac{\pi}{6}}  \log^2(1-e^{2it}) \ dt \label{I Expanded Out}. 
\end{align}

We denote the three integrals in \eqref{I Expanded Out} as:
\begin{align}
I_1 &= \int_{0}^{\frac{\pi}{6}} -\left(\frac{\pi}{2}-t \right)^2 \ dt \label{I_1 Original Definition} \\
I_2 &= \int_{0}^{\frac{\pi}{6}} i (\pi-2t) \log(1-e^{2it}) \ dt \label{I_2 Original Definition} \\
I_3 &= \int_{0}^{\frac{\pi}{6}}  \log^2(1-e^{2it}) \ dt\label{I_3 Original Definition}.
\end{align}
Since $I$ is a real number, it suffices to compute just the real parts of $I_1, I_2, I_3$ and sum those results together. 

We will start with $I_1,$ which is easy to evaluate with elementary calculus.
\begin{theorem}
We have 
\begin{align}\label{Value I_1}
\Re(I_1)&=I_1=-\frac{19 \pi^3}{648}.
\end{align} 
\end{theorem}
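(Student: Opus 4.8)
The plan is to evaluate $I_1 = \int_0^{\pi/6} -\left(\frac{\pi}{2}-t\right)^2 \, dt$ by a routine antiderivative computation, since the integrand is an elementary polynomial in $t$ once expanded. Because the integrand is manifestly real, $\Re(I_1) = I_1$, so there is nothing subtle to track here.

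First I would substitute $s = \frac{\pi}{2} - t$, so that $ds = -dt$, and the limits $t = 0$ and $t = \pi/6$ become $s = \pi/2$ and $s = \pi/3$ respectively. This converts the integral into $\int_{\pi/3}^{\pi/2} s^2 \, ds$ (the two sign flips — one from $-\left(\frac{\pi}{2}-t\right)^2$ and one from $ds = -dt$ — cancel). Then I would apply the power rule to get $\frac{s^3}{3}\Big|_{\pi/3}^{\pi/2} = \frac{1}{3}\left(\frac{\pi^3}{8} - \frac{\pi^3}{27}\right)$.

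Finally I would simplify: $\frac{\pi^3}{8} - \frac{\pi^3}{27} = \pi^3 \cdot \frac{27 - 8}{216} = \frac{19\pi^3}{216}$, so the integral equals $\frac{1}{3} \cdot \frac{19\pi^3}{216} = \frac{19\pi^3}{648}$. Since $I_1$ carries an overall minus sign in its definition, $I_1 = -\frac{19\pi^3}{648}$, which matches the claimed value.

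There is no real obstacle in this proof; it is purely a bookkeeping exercise with signs and arithmetic. The only thing to be careful about is the double sign flip in the substitution and the common-denominator computation $27 - 8 = 19$, but neither is genuinely difficult. Alternatively, one could expand $\left(\frac{\pi}{2}-t\right)^2 = \frac{\pi^2}{4} - \pi t + t^2$ and integrate term by term over $[0, \pi/6]$, which avoids the substitution entirely at the cost of slightly messier intermediate fractions.
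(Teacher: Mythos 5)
Your proposal is correct and follows essentially the same route as the paper: substitute $s=\frac{\pi}{2}-t$ (the paper writes $I_1=\int_{\pi/2}^{\pi/3}x^2\,dx$, i.e.\ it keeps the reversed limits rather than flipping them), then apply the power rule to obtain $-\frac{19\pi^3}{648}$. Your sign bookkeeping narration is slightly muddled (what actually cancels is the minus from $ds=-dt$ against the minus from reversing the limits, leaving the integrand's overall minus to be applied once), but the computation and final value are right.
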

\begin{proof}
Substituting $x=\frac{\pi}{2}-t$ into \eqref{I_1 Original Definition}, we have 
\begin{align*}
I_1=\int_{\frac{\pi}{2}}^{\frac{\pi}{3}} x^2 \ dx &=-\frac{19 \pi^3}{648}.
\end{align*} 
\end{proof}

Using the Clausen Function closed formulas from  \textbf{Theorem \ref{Clausen Closed Formulas}}, we can obtain $\Re(I_2).$ 
\begin{theorem}
We have 
\begin{align}\label{Value I_2}
\Re(I_2)&=\frac{19 \pi^3}{324}.
\end{align} 
\end{theorem}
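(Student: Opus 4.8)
The plan is to reduce the complex integral $I_2$ to a real elementary integral by extracting the real part of its integrand \emph{before} integrating, just as was done for $I_1$. First I would recall from \eqref{Li_1(z) Closed Form} that $\log(1-e^{2it}) = -\Li_1(e^{2it})$, so that by the definitions of the Clausen functions in \eqref{Clausen Cosine Definition} and \eqref{Clausen Sine Definition},
\begin{align*}
i(\pi - 2t)\log(1-e^{2it}) &= -i(\pi-2t)\bigl(C_1(2t) + i S_1(2t)\bigr) = (\pi-2t)S_1(2t) - i(\pi-2t)C_1(2t).
\end{align*}
Since $\pi - 2t$ is real for $t \in (0,\pi/6)$, the real part of the integrand is simply $(\pi - 2t)S_1(2t)$, and by linearity of the integral and of taking real parts, $\Re(I_2) = \int_0^{\pi/6}(\pi-2t)S_1(2t)\,dt$. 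Equivalently, one may substitute the Log-Sine Expansion \eqref{Log-Sine Expansion Formula} directly into \eqref{I_2 Original Definition} and use $\Im\log(1-e^{2it}) = -(\pi/2 - t)$ to reach the same place.

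Next I would insert the closed form $S_1(2t) = \tfrac{\pi}{2} - t$ from \eqref{S_1(2t) Closed Formula}, which is valid on $(0,\pi/6)\subset(0,\pi)$. Using $\pi - 2t = 2(\tfrac{\pi}{2}-t)$ this gives
\begin{align*}
\Re(I_2) &= \int_0^{\pi/6} (\pi - 2t)\Bigl(\frac{\pi}{2} - t\Bigr)\,dt = \int_0^{\pi/6} 2\Bigl(\frac{\pi}{2} - t\Bigr)^2\,dt.
\end{align*}
The last integral is exactly $-2$ times the one evaluated in \eqref{Value I_1}, so $\Re(I_2) = -2I_1 = \tfrac{19\pi^3}{324}$; alternatively one can evaluate it directly via the substitution $x = \tfrac{\pi}{2} - t$, exactly as in the proof of that theorem.

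There is essentially no serious obstacle here: the argument is a one-line real-part extraction followed by an elementary antiderivative. The only points requiring care are (i) that the branch of $\log(1-e^{2it})$ used in \eqref{Log-Sine Expansion Formula} and \eqref{Li_1(z) Closed Form} is the correct one on $(0,\pi/6)$, which is already guaranteed by \textbf{Theorem \ref{Log-Sine Expansion}}, and (ii) keeping the signs straight when pulling the factor $i$ through $C_1 + iS_1$, so that it is $S_1$ (not $-S_1$) that survives in the real part. Thus the ``hard part'' is purely bookkeeping; all of the substantive content was front-loaded into the Clausen closed formula \eqref{S_1(2t) Closed Formula}.
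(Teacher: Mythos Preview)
Your proposal is correct and essentially identical to the paper's own proof: the paper likewise rewrites $\log(1-e^{2it})=-\Li_1(e^{2it})$, extracts the real part as $(\pi-2t)S_1(2t)$, inserts \eqref{S_1(2t) Closed Formula} to obtain $\int_0^{\pi/6}2(\tfrac{\pi}{2}-t)^2\,dt$, and concludes $\Re(I_2)=-2I_1=\tfrac{19\pi^3}{324}$.
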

\begin{proof}
We compute the integral of the real part of the integrand in \eqref{I_2 Original Definition} and see
\begin{align}
\Re(I_2) &= \int_{0}^{\frac{\pi}{6}} \Re \left(i (\pi-2t) \log(1-e^{2it}) \right) \ dt \nonumber \\
&= \int_{0}^{\frac{\pi}{6}}  \Re \left(-i (\pi-2t) \Li_1(e^{2it}) \right)  \ dt \label{I_2 Li_1(e^2it) Formula}\\
&= \int_{0}^{\frac{\pi}{6}}(\pi -2t)S_1(2t) \ dt \label{I_2 Real Part Transformed} \\
&= \int_{0}^{\frac{\pi}{6}}2\left(\frac{\pi}{2} -t \right)^2  \ dt \label{I_2 S_1(2t) Formula}\\
&=-2I_1=\frac{19 \pi^3}{324} \nonumber,
\end{align}
where \eqref{I_2 Li_1(e^2it) Formula} follows from substituting $z=e^{2it}$ into the closed form of $\Li_1(z)$ from \eqref{Li_1(z) Closed Form}, and \eqref{I_2 S_1(2t) Formula} follows from rewriting and the integrand of \eqref{I_2 Real Part Transformed} with the closed form of $S_1(2t)$ from \eqref{S_1(2t) Closed Formula}.
\end{proof}

Now, we need to evaluate $I_3$ in order to obtain $\Re(I_3).$  Before we proceed to evaluate $I_3,$ we need to use the following antiderivative identity, whose proof will require repeated integration by parts.

\begin{theorem}
For $z \in \C$ with $0<|z|<1,$ we have 
\begin{align} \label{log^2(1-z)/z Antiderivative Formula}
\int \frac{\log^2(1-z)}{z} \ dz &= \log (z) \log ^2(1-z)+2 \text{Li}_2(1-z) \log (1-z)-2 \text{Li}_3(1-z).
\end{align}
\end{theorem}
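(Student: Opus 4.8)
The plan is to verify the antiderivative identity by differentiating the right-hand side and checking that we recover $\log^2(1-z)/z$, which is the cleanest route and sidesteps the need to carry out integration by parts carefully. Write $F(z) = \log(z)\log^2(1-z) + 2\Li_2(1-z)\log(1-z) - 2\Li_3(1-z)$. Differentiating term by term, the first term contributes $\frac{\log^2(1-z)}{z} - \frac{2\log(z)\log(1-z)}{1-z}$ by the product rule (using $\frac{d}{dz}\log(1-z) = -\frac{1}{1-z}$). For the second term, I would use the differentiation formula \eqref{Polylogarithmic Differentiation Formula} in the form $\frac{d}{dz}\Li_2(1-z) = -\frac{\Li_1(1-z)}{1-z} = \frac{\log(z)}{1-z}$, where the last equality uses the closed form \eqref{Li_1(z) Closed Form} with argument $1-z$, namely $\Li_1(1-z) = -\log(z)$; together with the product rule this term yields $\frac{2\log(z)\log(1-z)}{1-z} - \frac{2\Li_2(1-z)}{1-z}$. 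For the third term, $\frac{d}{dz}\Li_3(1-z) = -\frac{\Li_2(1-z)}{1-z}$, so $-2\Li_3(1-z)$ differentiates to $\frac{2\Li_2(1-z)}{1-z}$.

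Summing the three contributions, the $\pm\frac{2\log(z)\log(1-z)}{1-z}$ terms cancel and the $\mp\frac{2\Li_2(1-z)}{1-z}$ terms cancel, leaving exactly $\frac{\log^2(1-z)}{z}$, which is the integrand. Since two antiderivatives of the same function on a connected domain differ by a constant, this establishes \eqref{log^2(1-z)/z Antiderivative Formula} up to an additive constant; I would remark that the stated identity is the natural normalization (all terms vanishing in the appropriate limiting sense), or alternatively present it as an equality of antiderivatives, which is how it will be used later when it is paired with explicit limits of integration.

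The main subtlety — and the only place one must be a little careful — is the branch-of-logarithm bookkeeping: for $z$ on the unit circle (which is the case needed to evaluate $I_3$, where $z = e^{2it}$), the quantities $\log(z)$, $\log(1-z)$, and the arguments of the polylogarithms all sit on the boundary of their domains of convergence/analyticity, so the differentiation-under-the-branch argument is really being applied on the open unit disk $0 < |z| < 1$ and then extended by continuity to the boundary. Stating the theorem for $0 < |z| < 1$ as the author does is exactly what makes the formal differentiation legitimate, since all functions involved are holomorphic there and the identities \eqref{Polylogarithmic Differentiation Formula} and \eqref{Li_1(z) Closed Form} apply without branch ambiguity. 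I expect the rest to be entirely routine.

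If one instead wanted the integration-by-parts derivation the author advertises, the skeleton is: start from $\int \frac{\log^2(1-z)}{z}\,dz$, integrate by parts with $u = \log^2(1-z)$, $dv = dz/z$, producing $\log(z)\log^2(1-z) + 2\int \frac{\log(z)\log(1-z)}{1-z}\,dz$; then substitute $w = 1-z$ and integrate by parts again with $u = \Li_1(w)\cdot(\text{stuff})$ arrangement, repeatedly invoking \eqref{Polylogarithmic Integration Formula} to climb from $\Li_1$ to $\Li_2$ to $\Li_3$. This works but is more error-prone than verification by differentiation, so I would lead with the differentiation check.
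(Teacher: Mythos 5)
Your differentiation check is correct: writing $F(z)=\log(z)\log^2(1-z)+2\Li_2(1-z)\log(1-z)-2\Li_3(1-z)$, the cross terms $\pm\frac{2\log(z)\log(1-z)}{1-z}$ and $\mp\frac{2\Li_2(1-z)}{1-z}$ do cancel, leaving $F'(z)=\frac{\log^2(1-z)}{z}$, and on the punctured disk $0<|z|<1$ all the functions involved are holomorphic with $\Li_1(1-z)=-\log(z)$ unambiguous, so the identity holds as an equality of antiderivatives (i.e.\ up to the additive constant implicit in the indefinite-integral notation). This is a genuinely different route from the paper, which \emph{derives} the right-hand side by two successive integrations by parts: first $u=\Li_1(z)^2$, $dv=dz/z$, then $u=2\Li_1(z)$, $dv=\frac{\Li_1(1-z)}{1-z}\,dz$, climbing from $\Li_1$ to $\Li_2$ to $\Li_3$ via the integration formula \eqref{Polylogarithmic Integration Formula} — essentially the skeleton you sketch in your last paragraph. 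The trade-off is the usual one: your verification-by-differentiation is shorter and less error-prone, but presupposes the closed form; the paper's integration by parts is constructive and shows how one would find the expression without guessing it, which is in keeping with how the paper obtains its other antiderivative identities. Your remark that the boundary case $z=e^{2it}$ used in evaluating $I_3$ is reached by continuity from the open disk is a fair point of care that the paper itself glosses over, though strictly it concerns the application of the theorem rather than the theorem itself.
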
 

\begin{proof}
Notice that the left hand side of \eqref{log^2(1-z)/z Antiderivative Formula} is equal to 
\begin{align}\label{log^2(1-z)/z Original Integral}
\int \frac{\Li_1(z) {}^2}{z} \ dz.
\end{align}
by \eqref{Li_1(z) Closed Form}. We now integrate \eqref{log^2(1-z)/z Original Integral} by parts. Let $u=\Li_1(z) {}^2$ and $dv=\frac{dz}{z}.$ Then, $v=\log(z)$ and applying the differentiation formula from \eqref{Polylogarithmic Differentiation Formula} together with the Chain Rule, we get $du = - \frac{2 \Li_1(z)}{1-z} \ dz.$ Thus,
\begin{align}
\int \frac{\Li_1(z) {}^2}{z} \ dz &= \Li_1(z) {}^2 \log(z) - \int \frac{2 \log(z) \Li_1(z)}{1-z} \ dz \nonumber \\
&=\Li_1(z) {}^2 \log(z) + \int \frac{2 \Li_1(1-z) \Li_1(z)}{1-z} \ dz. \label{log^2(1-z)/z Integrated By Parts Once}
\end{align}
Now integrating the integral term in \eqref{log^2(1-z)/z Integrated By Parts Once} by parts with $u=2 \Li_1(z)$ and $dv=\frac{\Li_1(1-z)}{1-z} \ dz,$ we have $v=-\Li_2(1-z)$ and  $du=\frac{2}{1-z} \ dz,$ and
\begin{align}
\int \frac{\Li_1(z) {}^2}{z} \ dz &=\Li_1(z) {}^2 \log(z) - 2 \Li_1(z)\Li_2(1-z) + \int \frac{2 \Li_2(1-z) }{1-z} \ dz \nonumber \\
&= \Li_1(z) {}^2 \log(z) - 2 \Li_1(z)\Li_2(1-z)- 2\Li_3(1-z). \label{log^2(1-z)/z Integrated By Parts Twice}
\end{align}
The result follows upon replacing $\Li_1(z)$ with $-\log(1-z)$ in all terms of \eqref{log^2(1-z)/z Integrated By Parts Twice}.
\end{proof}
   
With this antiderivative formula, we can evaluate $I_3$ and extract its real part. 
\begin{theorem}
We have 
\begin{align*}
\Re(I_3) &= \frac{\pi^3}{324}.
\end{align*}
\end{theorem}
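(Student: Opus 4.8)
The plan is to evaluate
\[
I_3 = \int_{0}^{\frac{\pi}{6}} \log^2(1-e^{2it}) \ dt
\]
by a change of variables that converts it into the antiderivative formula \eqref{log^2(1-z)/z Antiderivative Formula}. Setting $z = e^{2it}$, so that $dz = 2i e^{2it} \ dt = 2iz \ dt$, gives $dt = \frac{dz}{2iz}$, and the contour runs along the unit circle from $z=1$ to $z=e^{i\pi/3}$. Hence
\[
I_3 = \frac{1}{2i}\int_{1}^{e^{i\pi/3}} \frac{\log^2(1-z)}{z} \ dz,
\]
and by \eqref{log^2(1-z)/z Antiderivative Formula} this equals $\frac{1}{2i}$ times
\[
\Bigl[\log(z)\log^2(1-z) + 2\Li_2(1-z)\log(1-z) - 2\Li_3(1-z)\Bigr]_{z=1}^{z=e^{i\pi/3}}.
\]
At the upper limit $z = e^{i\pi/3}$ we have $1 - e^{i\pi/3} = e^{-i\pi/3}$ (since $1 - e^{i\pi/3}$ has modulus $2\sin(\pi/6) = 1$ and argument $-\pi/3$), which is the crucial simplification: the polylogarithm arguments become unit complex exponentials $e^{-i\pi/3}$, so I can express everything via the Clausen functions $C_k$ and $S_k$. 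At the lower limit $z = 1$, one has $\log(1-z) \to -\infty$ but $\log(z) \to 0$; I will need to check that $\log(z)\log^2(1-z) \to 0$ along the circular path and that $\Li_2(1-z)\log(1-z) \to 0$ as $z\to 1$, so the lower limit contributes only $-2\Li_3(1) = -2\zeta(3)$ from the last term. Wait — that would introduce $\zeta(3)$, so I expect it to cancel against part of the upper-limit evaluation; tracking that cancellation carefully is the delicate bookkeeping.

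Concretely, I would first record the ingredients at $z = e^{i\pi/3}$: $\log(z) = i\pi/3$, $\log(1-z) = \log(e^{-i\pi/3}) = -i\pi/3$, $\Li_2(e^{-i\pi/3}) = C_2(-\pi/3) + iS_2(-\pi/3) = C_2(\pi/3) - iS_2(\pi/3)$, and $\Li_3(e^{-i\pi/3}) = C_3(\pi/3) - iS_3(\pi/3)$, using that $C_k$ is even and $S_k$ is odd. Then the upper-limit bracket becomes
\[
\frac{i\pi}{3}\Bigl(-\frac{i\pi}{3}\Bigr)^2 + 2\bigl(C_2(\tfrac{\pi}{3}) - iS_2(\tfrac{\pi}{3})\bigr)\Bigl(-\frac{i\pi}{3}\Bigr) - 2\bigl(C_3(\tfrac{\pi}{3}) - iS_3(\tfrac{\pi}{3})\bigr),
\]
and the lower-limit bracket contributes $+2\zeta(3)$. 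Multiplying by $\frac{1}{2i}$ and taking the real part, I extract
\[
\Re(I_3) = \Re\!\left(\frac{1}{2i}\Bigl[\cdots\Bigr]\right) = -\tfrac{1}{2}\Im\Bigl(\text{upper bracket} + 2\zeta(3)\Bigr),
\]
so only the imaginary parts of the bracketed expression matter. Collecting those: the term $\frac{i\pi}{3}\cdot(-\frac{\pi^2}{9})$ is purely imaginary and contributes $-\frac{\pi^3}{27}$; the term $-\frac{2i\pi}{3}C_2(\pi/3)$ contributes $-\frac{2\pi}{3}C_2(\pi/3)$ to the imaginary part; the term $-\frac{2i\pi}{3}\cdot(-iS_2(\pi/3)) = -\frac{2\pi}{3}S_2(\pi/3)$ is real and drops out; $-2C_3(\pi/3)$ is real and drops; and $+2iS_3(\pi/3)$ contributes $+2S_3(\pi/3)$; the real $2\zeta(3)$ drops. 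So $\Re(I_3) = -\tfrac12\bigl(-\frac{\pi^3}{27} - \frac{2\pi}{3}C_2(\pi/3) + 2S_3(\pi/3)\bigr)$.

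The remaining work is to supply the two special Clausen values $C_2(\pi/3)$ and $S_3(\pi/3)$. For $S_3$ I can substitute $t = \pi/6$ into the closed formula \eqref{S_3(2t) Closed Formula}, giving $S_3(\pi/3) = \frac{\pi^2}{3}\cdot\frac{\pi}{6} - \pi\cdot\frac{\pi^2}{36} + \frac{2}{3}\cdot\frac{\pi^3}{216} = \frac{\pi^3}{18} - \frac{\pi^3}{36} + \frac{\pi^3}{324}$. For $C_2(\pi/3)$ I can substitute $t = \pi/6$ into \eqref{C_2(2t) Closed Formula}, giving $C_2(\pi/3) = \zeta(2) - \frac{\pi^2}{6} + \frac{\pi^2}{36} = \frac{\pi^2}{36}$ (the famous value, since $\zeta(2) = \pi^2/6$ cancels). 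Plugging these in and simplifying the arithmetic should collapse everything to $\frac{\pi^3}{324}$. The main obstacle is not any single step but the justification of the limiting behavior at $z = 1$ along the arc — that $\log(z)\log^2(1-z)$ and $\Li_2(1-z)\log(1-z)$ both vanish there — together with the sign and conjugation bookkeeping in passing from $\Li_k(e^{-i\pi/3})$ to real Clausen values; a slip in either place would corrupt the final constant. Once the boundary terms and the two Clausen evaluations are pinned down, the conclusion $\Re(I_3) = \frac{\pi^3}{324}$ follows by direct computation.
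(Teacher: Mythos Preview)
Your approach is exactly the paper's: substitute $z=e^{2it}$, apply the antiderivative \eqref{log^2(1-z)/z Antiderivative Formula}, exploit $1-e^{i\pi/3}=e^{-i\pi/3}$ to turn the upper limit into Clausen values, and finish with $C_2(\pi/3)$ and $S_3(\pi/3)$ from \eqref{C_2(2t) Closed Formula}--\eqref{S_3(2t) Closed Formula}.

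Two slips to correct before the arithmetic closes. First, $\Re\bigl(\tfrac{1}{2i}w\bigr)=+\tfrac12\Im(w)$, not $-\tfrac12\Im(w)$ (since $\tfrac{1}{2i}(a+bi)=\tfrac{b}{2}-\tfrac{a}{2}i$); with the correct sign your expression becomes $\Re(I_3)=-\tfrac{\pi^3}{54}-\tfrac{\pi}{3}C_2(\pi/3)+S_3(\pi/3)$, which is precisely the paper's line \eqref{I_3 Clausen} and evaluates to $\tfrac{\pi^3}{324}$. As written your sign would produce $-\tfrac{\pi^3}{324}$. Second, the lower-limit term is $-2\Li_3(1-z)\big|_{z=1}=-2\Li_3(0)=0$, not $-2\Li_3(1)=-2\zeta(3)$; there is no $\zeta(3)$ to cancel, though as you observe a real constant would vanish upon taking $\Im$ anyway. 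With these fixed your computation matches the paper line for line.
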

\begin{proof}
Substituting $z=e^{2it}$ into \eqref{I_3 Original Definition}, we get 
\begin{align}
I_3 &= \int_{1}^{e^{\frac{i \pi}{3}}} \frac{\log^2(1-z)}{2iz} \ dx \label{I_3 COV}.
\end{align}

Letting $\psi(z)$ be the antiderivative expression on the right hand side of \eqref{log^2(1-z)/z Antiderivative Formula}, we can use the Fundamental Theorem of Calculus to see that
\begin{align}
\Re(I_3) &= \Re \left(\frac{\psi(e^{\frac{i \pi}3})}{2i} \right)-\Re \left(\frac{\lim_{z \rightarrow 1} \psi(z)}{2i} \right) \nonumber \\
&= \Re \left(\frac{\log (e^{\frac{i \pi}{3}}) \log ^2(1-e^{\frac{i \pi}{3}})}{2i}+\frac{2 \text{Li}_2(1-e^{\frac{i \pi}{3}}) \log (1-e^{\frac{i \pi}{3}})}{2i}-\frac{2 \text{Li}_3(1-e^{\frac{i \pi}{3}})}{2i} \right)-0 \\
&= -\frac{\pi^3}{54} - \frac{\pi}{3} C_2 \left(\frac{\pi}{3} \right) + S_3 \left(\frac{\pi}{3} \right) \label{I_3 Clausen} \\ 
&= -\frac{\pi^3}{54} - \frac{\pi^3}{108} + \frac{5 \pi^3}{162}=\frac{\pi^3}{324} \label{I_3 Simplified},
\end{align}
where \eqref{I_3 Simplified} follows from simplifying the second and third terms of \eqref{I_3 Clausen} by substituting $t=\pi/6$ into the formulas for $C_2(2t)$ and $S_3(2t)$ from \eqref{C_2(2t) Closed Formula} and \eqref{S_3(2t) Closed Formula}, respectively. 
\end{proof}

Putting everything together, we have 
\begin{align*}
I &= \Re(I_1)+\Re(I_2)+\Re(I_3) \\
&= -\frac{19 \pi^3}{648}+\frac{19 \pi^3}{324}+ \frac{\pi^3}{324}=\frac{7 \pi^3}{216}.
\end{align*}
Hence,
\begin{align*}
B(4) =\sum_{n \geq 0} \frac{\binom{2n}n}{2^{4n} (2n+1)^3} = \frac{7 \pi^3}{216}.
\end{align*}

\section{Evaluating $C(4)$ with a Double Integral}
We now prove our second identity \eqref{C(4) Identity}, namely
\begin{align*}
C(4) &= \sum_{n \in \N} \frac{1}{n^4 \binom{2n}n}= \frac{17 \pi^4}{3240}, 
\end{align*}
using the double integral 
\begin{align} \label{K Original Definition}
K &= \int_{0}^1 \int_{0}^{1-x} \frac{2 \log^2(x+y)}{1-xy} \ dy \ dx.
\end{align}

We first establish that $K=C(4).$ 
\begin{theorem}
We have $K=C(4).$ 
\end{theorem}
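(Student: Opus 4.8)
The plan is to follow the roadmap announced in the introduction: change variables to symmetrize the domain, perform the inner integration, expand the resulting kernel as a series in the central binomial coefficients, and integrate term by term. First I would apply the linear change of variables $x=(u+v)/2$, $y=(u-v)/2$, whose Jacobian has absolute value $1/2$. Under this map the line $x+y=1$ becomes $u=1$, the coordinate axes $x=0$ and $y=0$ become $v=u$ and $v=-u$, so the triangle $\{0\le x,\ 0\le y,\ x+y\le 1\}$ maps to $\{0\le u\le 1,\ -u\le v\le u\}$. The integrand becomes $2\log^2(u)/(1-(u^2-v^2)/4)$, i.e. $8\log^2(u)/(4-u^2+v^2)$, and by evenness in $v$ the $v$-integral is $2\int_0^u 8\log^2(u)/(4-u^2+v^2)\,dv$. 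Carrying out this elementary arctangent integral and combining with the Jacobian gives $K=\int_0^1 \log^2(u)\cdot g(u)\,du$ for an explicit $g(u)$ built from $\arctan\!\big(u/\sqrt{4-u^2}\big)/\sqrt{4-u^2}=\sin^{-1}(u/2)/\sqrt{4-u^2}$.

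Next I would recognize the arcsine factor via Theorem \ref{ArcSine Series Expansion}: with the substitution $u=2\sin\phi$ hidden inside, $\sin^{-1}(u/2)/\sqrt{4-u^2}=\sum_{n\in\N} u^{2n-1}/(2n\binom{2n}{n})$. Substituting this series into the integral for $K$ and invoking the Monotone Convergence Theorem (all terms are nonnegative on $(0,1)$, and $\log^2(u)\ge 0$) to interchange sum and integral, I am left with $K=c\sum_{n\in\N}\frac{1}{2n\binom{2n}{n}}\int_0^1 u^{2n-1+(\text{offset})}\log^2(u)\,du$ for the appropriate constant and exponent offset coming from $g$. Each of these is the standard integral $\int_0^1 u^{m}\log^2(u)\,du=2/(m+1)^3$, evaluated by repeated integration by parts exactly as in equation \eqref{I COV Integration By Parts}. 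After simplifying the powers of $2$ and the factor $2n$ in the denominator against the $(m+1)^3$ that appears, the summand should collapse to $1/(n^4\binom{2n}{n})$, yielding $K=C(4)$.

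The step I expect to be the main obstacle is the bookkeeping after the inner $v$-integration: one must be careful that the antiderivative $\arctan(v/\sqrt{4-u^2})/\sqrt{4-u^2}$ is evaluated at $v=u$, that $\arctan(u/\sqrt{4-u^2})$ genuinely equals $\sin^{-1}(u/2)$ on $[0,1]$ (it does, since $\tan\theta=u/\sqrt{4-u^2}$ forces $\sin\theta=u/2$ for $\theta\in[0,\pi/2)$), and that the exponent of $u$ that multiplies the arcsine series lines up so that the term-by-term integral produces a denominator which is exactly a perfect cube in $2n$; only then does the single remaining power of $2n$ in the arcsine-series denominator combine with it to give $n^4\binom{2n}{n}$. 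A secondary point requiring a word of justification is the interchange of summation and integration near $u=0$, where $\log^2(u)$ blows up: this is handled by the Monotone Convergence Theorem since every term of the series is nonnegative on $(0,1)$, so no dominated-convergence estimate is needed. Once the algebra is aligned, the identity $K=C(4)$ drops out immediately.
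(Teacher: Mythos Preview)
Your proposal is correct and follows essentially the same route as the paper's proof: the linear change of variables $x=(u+v)/2,\ y=(u-v)/2$ sends the triangle to $\{0\le u\le1,\ |v|\le u\}$, the $v$-integration produces $\dfrac{8\sin^{-1}(u/2)\log^2(u)}{\sqrt{4-u^2}}$, and then the arcsine series \eqref{ArcSine Squared Formula} together with $\int_0^1 u^{2n-1}\log^2(u)\,du=2/(2n)^3$ collapses the sum to $C(4)$. The only thing left imprecise in your sketch is the constant tracking (there is no ``offset'' in the exponent and the overall constant works out to exactly $1$), but you already flagged that bookkeeping as the main point to verify.
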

\begin{proof}
Substituting $x=\frac{u+v}{2}$ and $y=\frac{u-v}{2}$ into \eqref{K Original Definition} and using the Change of Variables formula, we get 
\begin{align}
K &= \int_{0}^{1} \int_{-u}^u \frac{4 \log^2(u)}{4-u^2+v^2} \ dv \ du \nonumber  \\
&=  \int_{0}^{1} \frac{8 \tan^{-1} \left(\frac{u}{\sqrt{4-u^2}} \right) \log^2(u)}{\sqrt{4-u^2}} \ du \nonumber \\
&=  \int_{0}^{1} \frac{8 \sin^{-1} \left(\frac{u}{2} \right) \log^2(u)}{\sqrt{4-u^2}} \ du \label{K COV} \\
&=  \int_{0}^{1} \sum_{n \in \N} \frac{4}{n \binom{2 n}{n}} u^{2 n-1} \log^2(u) \ du \label{K ArcSine Series} \\
&=  \sum_{n \in \N} \frac{4}{n \binom{2 n}{n}}\int_{0}^{1}  u^{2 n-1} \log^2(u) \ du \label{K Monotone Convergence Theorem} \\
&= \sum_{n \in \N} \frac{1}{n^4 \binom{2n}n}=C(4) \label{K IBP},
\end{align}
where \eqref{K ArcSine Series} follows from rewriting the integrand in \eqref{K COV} using the arcsine series representation from \eqref{ArcSine Squared Formula} and simplifying the resulting summand, and the interchanging of sum and integral in \eqref{K Monotone Convergence Theorem} is permitted by the Monotone Convergence Theorem. Finally, \eqref{K IBP} follows from repeated integration by parts of the integral inside the sum of \eqref{K Monotone Convergence Theorem} and simplifying the resulting summand. 
\end{proof}

We now evaluate the original double integral representation of $K$ in \eqref{K Original Definition} directly. We make the substitution $y=\frac{1-u(1+x^2)}{x}$ on \eqref{K Original Definition} to see that 
\begin{align}
K &= \int_{0}^{1} \int_{\frac{1}{1+x^2}}^{\frac{1-x+x^2}{1+x^2}} -\frac{2 \log ^2\left(\left(\frac{1+x^2}{x}\right) (1-u) \right)}{u x} \ du \ dx \nonumber \\
&= \int_{0}^{1} \int_{\frac{1}{1+x^2}}^{\frac{1-x+x^2}{1+x^2}} -\frac{2 \left(\log \left(\frac{1+x^2}{x}\right) + \log(1-u) \right)^2}{ux} \ du \ dx \nonumber \\
&=  \int_{0}^{1} \int_{\frac{1}{1+x^2}}^{\frac{1-x+x^2}{1+x^2}} -\frac{2 \log^2 \left(\frac{1+x^2}{x}\right)+4\log \left(\frac{1+x^2}{x}\right)\log(1-u) + 2 \log^2(1-u)}{ux} \ du \ dx \nonumber \\
&= \int_{0}^{1} \int_{\frac{1}{1+x^2}}^{\frac{1-x+x^2}{1+x^2}} -\frac{2 \log^2 \left(\frac{1+x^2}{x}\right)-4\log \left(\frac{1+x^2}{x}\right)\Li_1(u) + 2 \log^2(1-u)}{ux} \ du \ dx  \label{K Integral} \\
&= \int_{0}^{1} -\frac{2 \log ^2(x) \log \left(1+x^2\right)}{x} \ dx + \int_{0}^{1} \frac{4 \text{Li}_3\left(\frac{x}{1+x^2}\right)}{x} \ dx + \int_{0}^{1} \frac{4\log (x) \text{Li}_2\left(\frac{x^2}{1+x^2}\right) -4
   \text{Li}_3\left(\frac{x^2}{1+x^2}\right)}{x} \ dx \label{K 3 Integrals},
\end{align}
where \eqref{K 3 Integrals} follows from carrying out the inner integral of \eqref{K Integral} with respect to $u$  by using $\int \frac{du}{u}=\log(u),$ along with the integration formula in \eqref{Polylogarithmic Integration Formula} and the antiderivative formula \eqref{log^2(1-z)/z Antiderivative Formula} and simplifying the result with the Fundamental Theorem of Calculus. We now denote the integrals in \eqref{K 3 Integrals} as:
\begin{align}
K_1 &= \int_{0}^{1} -\frac{2 \log ^2(x) \log \left(1+x^2\right)}{x} \ dx \label{K_1 Original Definition} \\
K_2 &= \int_{0}^{1} \frac{4 \text{Li}_3\left(\frac{x}{1+x^2}\right)}{x} \ dx \label{K_2 Original Definition} \\
K_3 &= \int_{0}^{1} \frac{4 \log (x) \text{Li}_2\left(\frac{x^2}{1+x^2}\right) -4
   \text{Li}_3\left(\frac{x^2}{1+x^2}\right)}{x} \ dx \label{K_3 Original Definition}.
\end{align}
We evaluate $K_1,K_2,$ and $K_3.$ 

$K_1$ is the easiest to evaluate by using the definition of the polylogarithm.
\begin{theorem}
We have 
\begin{align}
K_1 &=-\frac{7 \pi^4}{1440}.
\end{align}
\end{theorem}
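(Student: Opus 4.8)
The plan is to evaluate $K_1$ by expanding $\log(1+x^2)$ as a power series and integrating term by term. Recall the Mercator expansion $\log(1+w)=\sum_{n \in \N}\frac{(-1)^{n-1}w^n}{n}$, which is valid for $|w|<1$ and, by Abel's theorem, also at $w=1$. Setting $w=x^2$ gives $\log(1+x^2)=\sum_{n \in \N}\frac{(-1)^{n-1}x^{2n}}{n}$ for all $x \in [0,1]$. Substituting this into \eqref{K_1 Original Definition} and interchanging the sum and the integral yields
\begin{align*}
K_1 &= -2\int_0^1 \frac{\log^2(x)}{x}\sum_{n \in \N}\frac{(-1)^{n-1}x^{2n}}{n} \ dx = -2\sum_{n \in \N}\frac{(-1)^{n-1}}{n}\int_0^1 x^{2n-1}\log^2(x) \ dx.
\end{align*}
The interchange is legitimate because, for each fixed $x \in (0,1)$, the sequence $x^{2n}/n$ is decreasing, so the partial sums of this alternating series are bounded in absolute value by the first term $x^2$; hence the partial sums of the full integrand are dominated by the integrable function $x\log^2(x)$ on $[0,1]$, and the Dominated Convergence Theorem applies.

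Next I would compute the inner integral. Repeated integration by parts (equivalently, the standard formula $\int_0^1 x^{k-1}\log^2(x) \ dx = 2/k^3$ applied with $k=2n$) gives $\int_0^1 x^{2n-1}\log^2(x) \ dx = \frac{1}{4n^3}$. Substituting this back produces
\begin{align*}
K_1 = -2\sum_{n \in \N}\frac{(-1)^{n-1}}{n}\cdot\frac{1}{4n^3} = -\frac{1}{2}\sum_{n \in \N}\frac{(-1)^{n-1}}{n^4} = \frac{1}{2}\sum_{n \in \N}\frac{(-1)^{n}}{n^4} = \frac{1}{2}\,\eta(4),
\end{align*}
using the definition \eqref{Eta Function Definition} of $\eta$.

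Finally, I would apply \textbf{Theorem \ref{Eta and Zeta Function Relation}} with $k=4$ to obtain $\eta(4)=-\left(1-\frac{1}{8}\right)\zeta(4)=-\frac{7}{8}\zeta(4)$, and then \eqref{Zeta(4) Closed Form} gives $\zeta(4)=\pi^4/90$. Therefore $K_1=\frac{1}{2}\cdot\left(-\frac{7}{8}\right)\cdot\frac{\pi^4}{90}=-\frac{7\pi^4}{1440}$, as claimed. Every step here is mechanical, so the only real point of care is the bookkeeping with the alternating signs and with this paper's sign convention for $\eta$; there is no genuine obstacle.
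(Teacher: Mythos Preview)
Your proof is correct and follows essentially the same approach as the paper: expand $\log(1+x^2)$ as a power series, interchange sum and integral by dominated convergence, integrate $x^{2n-1}\log^2(x)$ term by term, and identify the result as $\tfrac{1}{2}\eta(4)=-\tfrac{7}{16}\zeta(4)$. The only cosmetic difference is that the paper first substitutes $x=\sqrt{u}$ before expanding, whereas you expand directly; your explicit alternating-series bound for the dominating function is in fact a bit more careful than the paper's bare citation of DCT.
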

\begin{proof}
Substituting $x=\sqrt{u}$  into  \eqref{K_1 Original Definition} and observing that $\log(1+u)=-\Li_1(-u),$ we have
\begin{align}
K_1 &= \int_{0}^{1} -\frac{\log^2(u) \Li_1(-u)}{4u} \ dx \label{K_1 COV} \\
&= \int_{0}^{1} \sum_{n \in \N}  \frac{(-1)^n}{4n} u^{n-1}\log^2(u) \  du \label{K_1 Li_1 Series}\\
&= \sum_{n \in \N}   \frac{(-1)^n}{4n} \int_{0}^{1}  u^{n-1}\log^2(u) \  du \label{K_1 Lebesgue Dominated Convergence}\\
&= \sum_{n \in \N} \frac{(-1)^n}{2n^4} \label{K_1 Integration By Parts} \\
&=\frac{\eta(4)}{2}=-\frac{7}{16}\zeta(4)=-\frac{7 \pi^4}{1440}, \label{K_1 Simplified} 
\end{align}
where \eqref{K_1 Li_1 Series} follows from rewriting the integrand in \eqref{K_1 COV} by substituting $z=-u$ into the definition of $\Li_1(z)$ in \eqref{Polylogarithm Definition} and simplifying the resulting summand, and the interchanging of sum and integral in \eqref{K_1 Lebesgue Dominated Convergence} is permitted by the Lebesgue Dominated Convergence Theorem. Finally, \eqref{K_1 Integration By Parts} follows from evaluating the integral inside the sum of \eqref{K_1 Lebesgue Dominated Convergence} by repeated integration by parts and simplifying the resulting summand, and \eqref{K_1 Simplified} follows from substituting $k=4$ into the algebraic formula for $\eta(k)$ stated in \eqref{Eta(k) from Zeta(k)}. 
\end{proof}

$K_2$ is also easy to evaluate and will require us to invoke the central binomial coefficient identities from \textbf{Theorem \ref{Generalized Binomial Coefficient Identities}}.

\begin{theorem}
We have 
\begin{align}
K_2 &=\frac{7 \pi^4}{216} + \frac{C(4)}{4}.
\end{align}
\end{theorem}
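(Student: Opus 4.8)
The plan is to expand the trilogarithm into its defining series and reduce $K_2$ to a parity-split sum of elementary Beta integrals. First, note that for $x \in (0,1]$ the argument $\frac{x}{1+x^2}$ lies in $(0,\tfrac12]$ (since $1+x^2 \geq 2x$), so every term of $\Li_3\!\left(\frac{x}{1+x^2}\right)$ is nonnegative. Using \eqref{Polylogarithm Definition}, I would write
\[
K_2 = 4\int_{0}^{1} \frac{1}{x}\sum_{n \in \N}\frac{1}{n^3}\left(\frac{x}{1+x^2}\right)^{\!n}\,dx = 4\sum_{n \in \N}\frac{1}{n^3}\int_{0}^{1}\frac{x^{n-1}}{(1+x^2)^n}\,dx,
\]
the interchange of sum and integral being justified by the Monotone Convergence Theorem (the partial sums are nonnegative and increasing).

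Next I would evaluate the inner integral $a_n := \int_0^1 x^{n-1}(1+x^2)^{-n}\,dx$ in closed form. The key observation is a reflection symmetry: substituting $x \mapsto 1/x$ shows that $\int_1^{\infty} x^{n-1}(1+x^2)^{-n}\,dx = a_n$ as well, so $a_n = \tfrac12\int_0^{\infty} x^{n-1}(1+x^2)^{-n}\,dx$; then substituting $t = x^2$ and comparing with the second integral in \eqref{Beta Function Definitions} with $u = n/2$ and $u+v = n$ yields $a_n = \tfrac14 \Beta(n/2,\,n/2)$. Substituting this back gives $K_2 = \sum_{n \in \N} n^{-3}\,\Beta(n/2,\,n/2)$.

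Finally I would split this series according to the parity of $n$. For $n = 2m$, identity \eqref{Beta(n,n) Formula} turns the even part into $\sum_{m \in \N}\frac{1}{4 m^4\binom{2m}{m}} = \tfrac14 C(4)$. For $n = 2m+1$, identity \eqref{Beta(2n+1/2,2n+1/2) formula} turns the odd part into $\pi\sum_{m \geq 0}\frac{\binom{2m}{m}}{2^{4m}(2m+1)^3} = \pi\, B(4)$, and invoking the value $B(4) = \tfrac{7\pi^3}{216}$ already established in Section~3 gives $\tfrac{7\pi^4}{216}$. Adding the two pieces produces $K_2 = \tfrac{7\pi^4}{216} + \tfrac{C(4)}{4}$.

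The only genuinely delicate step is recognizing that $a_n = \tfrac14\Beta(n/2,n/2)$; everything else is routine bookkeeping. I expect the reflection-symmetry trick to be the crux: a direct attack on $a_n$ via partial fractions or repeated integration by parts would be far messier and would obscure the clean, simultaneous appearance of both $B(4)$ and $C(4)$.
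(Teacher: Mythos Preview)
Your proposal is correct and follows essentially the same route as the paper. The only cosmetic difference is ordering: the paper first substitutes $x=\sqrt{u}$ and applies the reflection $u\mapsto 1/u$ at the level of the $\Li_3$ integral (so that $\sqrt{u}/(1+u)$ is visibly invariant) and \emph{then} expands the series, whereas you expand first and apply the reflection $x\mapsto 1/x$ termwise to each $a_n$; either way one lands on $K_2=\sum_{n\in\N} n^{-3}\Beta(n/2,n/2)$ and the same parity split.
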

\begin{proof}
Substituting $x=\sqrt{u}$ into \eqref{K_2 Original Definition}, we have 
\begin{align}\label{K_2 Representation 1}
K_2 &= \int_{0}^{1} \frac{2 \Li_3 \left(\frac{\sqrt{u}}{1+u} \right)}{u} \ du. 
\end{align}
Further substituting $u \mapsto 1/u$ shows that 
\begin{align}\label{K_2 Representation 2}
K_2 &= \int_{1}^{\infty} \frac{2 \Li_3 \left(\frac{\sqrt{u}}{1+u} \right)}{u} \ du. 
\end{align}
Hence, adding the representations \eqref{K_2 Representation 1} and \eqref{K_2 Representation 2} and dividing by $2,$ we get 
\begin{align}
K_2 &= \int_{0}^{\infty} \frac{\Li_3 \left(\frac{\sqrt{u}}{1+u} \right)}{u} \ du \label{K_2 COV} \\
&= \int_{0}^{\infty} \sum_{n \in \N} \frac{1}{n^3} \frac{u^{\frac{n}{2}-1}}{(1+u)^{n}}  \ du \label{K_2 Li_3 Series} \\
&=  \sum_{n \in \N} \int_{0}^{\infty} \frac{1}{n^3} \frac{u^{\frac{n}{2}-1}}{(1+u)^{n}} \ du \label{K_2 Monotone Convergence Theorem} \\ 
&=  \sum_{n \in \N}  \frac{\Beta \left(\frac{n}{2},\frac{n}{2} \right)}{n^3} \label{K_2 Beta Function}  \\
&=  \sum_{n \geq 0 }  \frac{\Beta \left(\frac{2n+1}{2},\frac{2n+1}{2} \right)}{(2n+1)^3} + \sum_{n \in \N}  \frac{\Beta \left(n,n \right)}{(2n)^3} \label{K_2 Split into Odd and Even Terms} \\
&= \sum_{n \geq 0 } \frac{\pi \binom{2n}n }{2^{4n} (2n+1)^3} + \sum_{n \in \N} \frac{1}{4n^4\binom{2n}n} \label{K_2 Simplified} \\
&= \pi B(4) + \frac{C(4)}{4}= \frac{7 \pi^4}{216} + \frac{C(4)}{4} \nonumber,
\end{align}
where \eqref{K_2 Li_3 Series} follows from rewriting the integrand in \eqref{K_2 COV} by substituting $z=\frac{\sqrt{u}}{1+u}$ into the definition of $\Li_3(z)$ from \eqref{Polylogarithm Definition}, and the interchanging of sum and integral in \eqref{K_2 Monotone Convergence Theorem} is permitted by the Monotone Convergence Theorem. Then \eqref{K_2 Split into Odd and Even Terms} follows from splitting the series \eqref{K_2 Beta Function} into a sum of odd terms and a sum of even terms. Finally, \eqref{K_2 Simplified} follows from simplifying the summands in \eqref{K_2 Split into Odd and Even Terms} using the formulas for $\Beta((2n+1)/2, (2n+1)/2)$ from \eqref{Beta(2n+1/2,2n+1/2) formula} and $\Beta(n,n)$ from \eqref{Beta(n,n) Formula}.
\end{proof}

$K_3$ is the hardest to evaluate. For this, we need an antiderivative identity whose proof requires several tedious integration by parts. We simply quote the following result from \textit{Mathematica}.

\begin{theorem}
We have 

\begin{align}\label{Long Antiderivative}
\begin{split}
\int  \frac{\log \left(\frac{z}{1-z}\right)\text{Li}_2(z) -2 \text{Li}_3(z)}{z-z^2} \ dz &= \frac{1}{4} \log ^4(1-z)-\frac{1}{2} \log (z) \log ^3(1-z)+\frac{1}{2} \log ^2(z) \log
   ^2(1-z) + \text{Li}_2(z){}^2 \\
   &+\frac{1}{2} \text{Li}_2(1-z) \log ^2(1-z)+\frac{1}{2}
   \text{Li}_2(z) \log ^2(1-z)+\text{Li}_2(z) \log ^2(z) \\
   &-\text{Li}_2(z) \log (z) \log
   (1-z) + \text{Li}_2\left(\frac{z}{z-1}\right) \log
   ^2(1-z)+\text{Li}_2\left(\frac{z}{z-1}\right) \log ^2(z) \\
   &-2\text{Li}_2\left(\frac{z}{z-1}\right) \log (z) \log (1-z) -\text{Li}_3(1-z) \log (1-z)+2 \text{Li}_3(z) \log (1-z)\\
   &-\text{Li}_3(z) \log (z)+2
   \text{Li}_3\left(\frac{z}{z-1}\right) \log (1-z) -2\text{Li}_3\left(\frac{z}{z-1}\right) \log (z)+\text{Li}_4(1-z) \\
   &-\text{Li}_4(z) +2 \text{Li}_4\left(\frac{z}{z-1}\right). 
\end{split}
\end{align}
\end{theorem}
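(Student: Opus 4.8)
The plan is to verify the claimed antiderivative identity \eqref{Long Antiderivative} by differentiation rather than by reconstructing the (several pages of) integration by parts that produced it. Concretely, I would let $F(z)$ denote the right-hand side of \eqref{Long Antiderivative} and show that $F'(z) = \dfrac{\log\!\left(\frac{z}{1-z}\right)\Li_2(z)-2\Li_3(z)}{z-z^2}$ as an identity of meromorphic functions on a suitable simply connected domain (say $0<z<1$, which is all we need for the application to $K_3$). This reduces the whole theorem to a finite, purely mechanical computation: differentiate each of the roughly twenty summands of $F$, using the chain rule together with the standard derivatives $\frac{d}{dz}\log z=\frac1z$, $\frac{d}{dz}\log(1-z)=\frac{-1}{1-z}$, $\frac{d}{dz}\Li_k(w)=\frac{\Li_{k-1}(w)}{w}w'$ from \eqref{Polylogarithmic Differentiation Formula}, and the particular substitutions $w=z$, $w=1-z$, $w=\frac{z}{z-1}$. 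The three relevant inner derivatives are $\frac{d}{dz}(1-z)=-1$ and $\frac{d}{dz}\frac{z}{z-1}=\frac{-1}{(z-1)^2}$, and for the last one $\frac{w'}{w}=\frac{-1}{z(z-1)}=\frac{1}{z-z^2}$, which is exactly the weight appearing on the right-hand side; this is the structural reason the identity has the shape it does.

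After differentiating, I would organize the resulting terms by their transcendental weight and by which polylogarithm appears. The $\Li_4$ terms contribute $\frac{\Li_3(1-z)}{1-z}(-1) - \frac{\Li_3(z)}{z} + 2\frac{\Li_3(z/(z-1))}{z/(z-1)}\cdot\frac{-1}{(z-1)^2}$, i.e. $-\frac{\Li_3(1-z)}{1-z}-\frac{\Li_3(z)}{z}+\frac{2\Li_3(z/(z-1))}{z-z^2}$; the derivatives of the $\Li_3(\cdot)\log(\cdot)$ terms produce matching $\Li_3$-over-$(1-z)$ and $\Li_3$-over-$z$ pieces plus new $\Li_2\log$ and $\Li_3$ terms, and so on down the weight filtration, with the pure-logarithm block ($\frac14\log^4(1-z)$ etc.) closing off the bottom. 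The key algebraic inputs that make the cancellations work are the Landen-type connection formulas, above all $\log\!\big(\tfrac{z}{z-1}\big)=\log z-\log(1-z)$ (valid up to the relevant branch on $(0,1)$, after accounting for $z-1<0$) and the dilogarithm reflection/Landen identities relating $\Li_2(z)$, $\Li_2(1-z)$, and $\Li_2\!\big(\tfrac{z}{z-1}\big)$; these let one rewrite every stray $\log\!\big(\tfrac{z}{z-1}\big)$ factor in terms of $\log z$ and $\log(1-z)$ so that like terms can be collected. I would present this as: group the derivative into weight-$3$ polylog terms, weight-$2$ polylog times log-squared terms, and weight-$0$ log-to-the-fourth terms, and check each group telescopes to the single target term $\frac{\log(z/(1-z))\Li_2(z)}{z-z^2}-\frac{2\Li_3(z)}{z-z^2}$.

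The main obstacle I anticipate is purely bookkeeping: there are enough summands, each spawning two or three terms under the product and chain rules, that a naive expansion yields on the order of fifty terms that must cancel in bunches, and the cancellations are not local — a $\Li_2(z)\log z\log(1-z)$ term coming from differentiating $\Li_2(z)\log^2 z$ has to be annihilated against contributions from $\Li_2(z)\log z\log(1-z)$ directly, from $\Li_2(z/(z-1))$ terms after applying the Landen identity, and from $\Li_3$ terms whose derivative drops weight. Keeping the branch of $\log(z/(z-1))$ consistent (on $0<z<1$ one has $z/(z-1)<0$, so $\log(z/(z-1))$ must be read as $\log z-\log(1-z)$ with an appropriate constant, or one simply works with the real combination $\log z-\log|1-z|$ throughout) is the one genuine subtlety rather than mere arithmetic. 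Since the paper attributes the identity to \emph{Mathematica} and only needs it as a tool, I would keep this verification terse: state that $F'(z)$ equals the integrand by the differentiation rules \eqref{Polylogarithmic Differentiation Formula}, the chain rule, and the elementary dilogarithm connection identities, carry out the weight-by-weight cancellation in a displayed computation, and note that both sides vanish in the limit $z\to 0^+$ (or fix the integration constant by evaluating at a convenient point) so the antiderivative is the stated one on $(0,1)$, which is the interval relevant to $K_3$.
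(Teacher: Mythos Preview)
Your differentiation strategy is sound and would constitute a genuine proof: for an antiderivative identity it suffices to check that the derivative of the proposed right-hand side equals the integrand, and the weight-by-weight bookkeeping you outline (together with the Landen/reflection relations for $\Li_2$) is exactly what makes the cancellations close up. The only real subtlety you flag, the branch of $\log\!\big(\tfrac{z}{z-1}\big)$ on $(0,1)$, is handled correctly by reading it as $\log z-\log(1-z)$ plus a constant; and since the identity is only used via the Fundamental Theorem of Calculus between $z\to 0^+$ and $z=\tfrac12$, any additive constant is irrelevant to the application.

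By contrast, the paper does not actually prove this theorem at all: it explicitly states that the formula is ``quoted from \textit{Mathematica}'' and moves on. So your proposal is not merely a different route but strictly more than what the paper provides. What the paper buys is brevity --- the identity is a tool, not a target, so outsourcing it to a CAS keeps the exposition focused on $K_3$. What your approach buys is self-containment and verifiability; if you carry it out, even tersely, the reader need not trust an unexhibited computer calculation.
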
  
\begin{theorem}
We have 
\begin{align}
K_3 &= -\frac{17 \pi^4}{720}.
\end{align}
\end{theorem}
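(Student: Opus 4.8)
The plan is to evaluate $K_3$ by reducing its integrand to the antiderivative form supplied in \eqref{Long Antiderivative}, then applying the Fundamental Theorem of Calculus with carefully computed boundary values. First I would make the substitution $x = \sqrt{z}$ (equivalently work with $z = x^2$) so that $x^2/(1+x^2) \mapsto z/(1+z)$, turning $K_3$ into an integral over $z \in (0,1)$ of a combination of $\log(z)\,\Li_2(z/(1+z))$ and $\Li_3(z/(1+z))$ divided by $z$. The next maneuver is a Landen-type substitution: note that if $w = z/(1+z)$ then $z = w/(1-w)$ and $1+z = 1/(1-w)$, so $z/(1+z) = w$ corresponds to $w/(1-w) \leftrightarrow z$; more usefully, setting $w = z/(1+z)$ (so $w$ runs from $0$ to $1/2$) converts $\frac{dz}{z}$ into $\frac{dw}{w(1-w)}$ and $\log(z) = \log(w) - \log(1-w) = \log\!\left(\frac{w}{1-w}\right)$. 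Under this change the integrand becomes exactly $\dfrac{\log\!\left(\frac{w}{1-w}\right)\Li_2(w) - 2\Li_3(w)}{w - w^2}$ — which is precisely the integrand whose antiderivative is given by \eqref{Long Antiderivative}.

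Having matched the integrand, I would then write $K_3 = 4\bigl[\Psi(w)\bigr]_{w\to 0}^{w = 1/2}$, where $\Psi$ denotes the right-hand side of \eqref{Long Antiderivative}. This requires evaluating the antiderivative at $w = 1/2$ and taking the limit as $w \to 0^+$. At $w = 1/2$ one needs closed forms for $\Li_2(1/2)$, $\Li_3(1/2)$, $\Li_4(1/2)$, $\Li_2(-1)$, $\Li_3(-1)$, $\Li_4(-1)$ (the arguments $\frac{z}{z-1}$ at $z=1/2$ and $1-z$ at $z=1/2$ both specialize nicely: $\frac{1/2}{1/2 - 1} = -1$ and $1 - 1/2 = 1/2$), together with $\log 2$. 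Available facts include \eqref{Li_2(1/2) Closed Form}, \eqref{Zeta(2) Closed Form}, \eqref{Zeta(4) Closed Form}, and the $\eta$–$\zeta$ relation \eqref{Eta(k) from Zeta(k)}; the classical evaluations $\Li_3(1/2) = \frac{7}{8}\zeta(3) - \frac{\pi^2}{12}\log 2 + \frac{\log^3 2}{6}$ and the known value of $\Li_4(1/2)$ in terms of $\zeta(4)$, $\zeta(3)\log 2$, $\pi^2\log^2 2$, $\log^4 2$ would also be invoked (these should be cited alongside the other standard polylogarithm references). The $\log 2$ and $\zeta(3)$ contributions must cancel in the final combination — this cancellation is the arithmetic heart of the computation.

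The limit $w \to 0^+$ is the delicate part. Individually several terms in $\Psi$ blow up or are indeterminate: $\log(w)\log^2(1-w) \to 0$, but $\log^2(w)\log^2(1-w)$, $\log(z)\log^3(1-z)$, and the terms $\Li_2\!\left(\frac{z}{z-1}\right)\log^2 z$, $\Li_3\!\left(\frac{z}{z-1}\right)\log z$ all involve products of polylogarithms near $0$ with powers of $\log w$. Since $\Li_k(w) = w + O(w^2)$ and $\Li_k(w/(w-1)) = -w + O(w^2)$ as $w\to 0$, each such product is $O(w\log^j w) \to 0$; the terms $\Li_4(1-w) \to \zeta(4)$, $\Li_4(w) \to 0$, $\Li_4(w/(w-1)) \to 0$, $\Li_3(1-w)\log(1-w) \to 0$, and $\Li_2(1-w)\log^2(1-w) \to 0$. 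So I expect $\lim_{w\to 0^+}\Psi(w) = \zeta(4) = \pi^4/90$, but I would verify each of the roughly twenty terms carefully, since a single sign or factor error propagates to the final answer.

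The main obstacle is therefore the sheer bookkeeping: correctly evaluating the twenty-odd-term expression $\Psi$ at $w = 1/2$ using the special values of $\Li_2, \Li_3, \Li_4$ at $1/2$ and $-1$, correctly taking the $w\to 0$ limit term by term, and checking that all $\log 2$, $\log^2 2$, $\log^3 2$, $\log^4 2$, and $\zeta(3)$ contributions cancel so that only a rational multiple of $\pi^4$ survives — namely $K_3 = 4\bigl(\Psi(1/2) - \zeta(4)\bigr) = -\frac{17\pi^4}{720}$. I would organize this by grouping $\Psi$ into the pure-logarithm block, the $\Li_2$-times-log block, the $\Li_3$-times-log block, and the weight-four $\Li_4$ block, simplifying each block separately before summing. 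As a sanity check, I would confirm the final numerics against the asserted value and, as an additional consistency check, note that combining $K = K_1 + K_2 + K_3$ with the already-established $K = C(4)$ and the value $K_2 = \pi B(4) + C(4)/4$ yields a linear equation in $C(4)$ whose solution must reproduce $C(4) = 17\pi^4/3240$ — this cross-check essentially forces the value of $K_3$ and provides strong confirmation of the computation.
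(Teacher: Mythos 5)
Your route is essentially the paper's: your two substitutions $z=x^2$ followed by $w=z/(1+z)$ compose to the paper's single change of variables $x=\sqrt{z/(1-z)}$, after which both arguments apply the antiderivative \eqref{Long Antiderivative} on $(0,\tfrac12)$; your identification of the lower boundary value, $\lim_{w\to 0^+}\Psi(w)=\zeta(4)$, is correct (and indeed this is what produces the $-\Li_4(1)$ term in the paper's evaluation). Two slips should be fixed, though neither breaks the method. First, the factor $4$ in ``$K_3=4\bigl[\Psi(w)\bigr]_{w\to0}^{w=1/2}$'' contradicts your own (correct) reduction: the constants are completely absorbed by the substitutions, since $\log x=\tfrac12\log z$ and $\tfrac{dx}{x}=\tfrac{dz}{2z}$ turn the prefactors $4$ into the coefficients $1$ and $2$ of the matched integrand, so $K_3=\Psi(\tfrac12)-\zeta(4)$ with no factor $4$; taken literally, $4\bigl(\Psi(\tfrac12)-\zeta(4)\bigr)$ would give $-\tfrac{17\pi^4}{180}$, not the value $-\tfrac{17\pi^4}{720}$ you (correctly) state. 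Second, you should not plan to invoke a closed form for $\Li_4(1/2)$ -- none is known in terms of the standard constants, and none is needed: at $z=\tfrac12$ one has $1-z=\tfrac12$ and $\tfrac{z}{z-1}=-1$, and the block-by-block grouping you propose shows that the $\Li_4(\tfrac12)$ terms cancel ($\Li_4(1-z)-\Li_4(z)$), as do all $\Li_3(\tfrac12)$ terms and all $\Li_2(-1)$, $\Li_3(-1)$ cross terms, leaving exactly $\Psi(\tfrac12)=2\Li_4(-1)+\Li_2(\tfrac12)^2+\log^2(2)\,\Li_2(\tfrac12)+\tfrac{1}{4}\log^4(2)$, whence $K_3=\Psi(\tfrac12)-\zeta(4)=-\tfrac{17\pi^4}{720}$ using only \eqref{Li_2(1/2) Closed Form}, \eqref{Eta(k) from Zeta(k)} and \eqref{Zeta(4) Closed Form}, just as in the paper. (Your closing cross-check via $K=K_1+K_2+K_3=C(4)$ is circular as a proof, since the paper uses $K_3$ to determine $C(4)$, but it is fine as a numerical sanity check.)
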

\begin{proof}
Substituting $x=\sqrt{\frac{z}{1-z}}$ into \eqref{K_3 Original Definition}, we get 
\begin{align}
K_3 &= \int_0^{\frac{1}{2}} \frac{\log \left(\frac{z}{1-z}\right)\text{Li}_2(z) -2 \text{Li}_3(z)}{z-z^2} \ dz. \label{K_3 Simplified}
\end{align}

Letting $\psi(z)$ be the expression on the right hand side of \eqref{Long Antiderivative}, we get by the Fundamental Theorem of Calculus that 
\begin{align}
K_3 &= \psi\left(\frac{1}{2} \right) - \lim_{z \rightarrow 1} \psi(z) \nonumber \\
&=2\Li_4(-1) + \Li_2 \left( \frac{1}{2} \right)^2 +\log^2(2) \ \Li_2\left(\frac{1}{2} \right) +\frac{\log^4(2)}{4}-\Li_4(1) \label{K_3 Result} \\
&= -\frac{7 \pi ^4}{360}+\left(\frac{\pi ^2}{12}-\frac{\log
   ^2(2)}{2}\right)^2+\log ^2(2) \left(\frac{\pi
   ^2}{12}-\frac{\log ^2(2)}{2}\right) +\frac{\log ^4(2)}{4} -\frac{\pi^4}{90} \label{K_3 Li_2(1/2) Formula} \\
   &=-\frac{7 \pi ^4}{360} + \frac{\pi^4}{144}-\frac{\pi^4}{90}= -\frac{17 \pi^4}{720} \nonumber,
\end{align}
where \eqref{K_3 Li_2(1/2) Formula} follows from simplifying \eqref{K_3 Result}  using the closed formula for $Li_2(1/2)$ from \eqref{Li_2(1/2) Closed Form}. 
\end{proof}

\begin{remark*}
There may be a  connection between $K_3$ and the identity 
\begin{align*}
    \sum_{n \in \N} \frac{H_n^2}{n^2} &=\frac{17 \pi^4}{360},
\end{align*}
as studied in \cite{Borwein},
where $H_n$ is the $n$-th harmonic number defined by $$H_n=\sum_{m=1}^n \frac{1}{m}.$$
\end{remark*}

Thus, we have 
\begin{align*}
K &= K_1+K_2+K_3 \\
&= -\frac{7 \pi^4}{1440} +\frac{7 \pi^4}{216} + \frac{C(4)}{4}- \frac{17 \pi^4}{720} \\
&= \frac{17 \pi^4}{4320} + \frac{K}{4},
\end{align*}
which implies $K=17 \pi^4/3240$ upon rearranging terms. Hence, we have 
\begin{align*}
\sum_{n \in \N} \frac{1}{n^4 \binom{2n}n} &= \frac{17 \pi^4}{3240}.
\end{align*}

\section{Evaluating Some Challenging Polylogarithmic Integrals}

We now evaluate some challenging polylogarithmic integrals whose calculations which surprisingly require us to know $B(4)$ and $C(4).$ 

Before we state these integrals, we give integral representations of $B(4)$ and of $C(4)$ which will appear in subsequent evaluations.
\begin{theorem}
We have 
\begin{align}
B(4) &= \int_{0}^1 \frac{\log^2(x)}{2\sqrt{1-\frac{x^2}{4}}} \ dx = \int_{0}^{1} \frac{-2 \sin^{-1} \left(\frac{x}{2} \right) \log(x)}{x} \ dx \label{B(4) Integral Representations}\\
C(4) &= \int_0^1 \frac{8 \sin^{-1} \left(\frac{x}{2} \right) \log(x)}{\sqrt{4-x^2}} \ dx =\int_{0}^{1} \frac{\Li_3(x-x^2)}{x} \ dx = \int_{0}^{\infty} \frac{\Li_3 \left( \frac{x}{(1+x)^2} \right)}{2x} \ dx \label{C(4) Integral Representations}
\end{align}
\end{theorem}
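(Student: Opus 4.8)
The plan is to treat the five displayed identities one at a time, reducing each integral to a series we already recognize. Two of them require essentially no new work: the representation $B(4)=\int_0^1 \frac{\log^2(x)}{2\sqrt{1-x^2/4}}\,dx$ is precisely line \eqref{I COV}, produced in Section 3 by the substitution $x=2\sin t$ in $I$; and $C(4)=\int_0^1 \frac{8\sin^{-1}(x/2)\log^2(x)}{\sqrt{4-x^2}}\,dx$ is line \eqref{K COV}, which Section 4 already identifies with $C(4)$ through the arcsine series \eqref{ArcSine Squared Formula} and term-by-term integration. So it remains to establish the other three.

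For the second $B(4)$ representation I would integrate the first by parts, taking $u=\log^2(x)$ and $dv=\frac{dx}{2\sqrt{1-x^2/4}}$, so that $v=\sin^{-1}(x/2)$ and $du=\frac{2\log(x)}{x}\,dx$. The boundary term vanishes at both endpoints, since $\log^2(1)=0$ and $x\log^2(x)\to 0$ as $x\to 0^{+}$, leaving exactly $-2\int_0^1 \frac{\sin^{-1}(x/2)\log(x)}{x}\,dx$. Alternatively, expanding $\sin^{-1}(x/2)=\sum_{n\ge 0}\frac{\binom{2n}{n}}{2^{4n+1}(2n+1)}x^{2n+1}$, dividing by $x$, and integrating term by term with $\int_0^1 x^{2n}\log(x)\,dx=-\frac{1}{(2n+1)^2}$ gives the same conclusion.

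For the two remaining $C(4)$ representations the recipe is: expand the cubic polylogarithm by \eqref{Polylogarithm Definition}, divide through by the prefactor, interchange sum and integral, and recognize the resulting Beta integrals via \textbf{Theorem \ref{Generalized Binomial Coefficient Identities}}. Concretely, $\frac{\Li_3(x-x^2)}{x}=\sum_{n\in\N}\frac{x^{n-1}(1-x)^n}{n^3}$ and $\int_0^1 x^{n-1}(1-x)^n\,dx=\Beta(n,n+1)=\frac12\Beta(n,n)=\frac{1}{n\binom{2n}{n}}$ by \eqref{Beta(n,n) Formula}, so the integral collapses to $\sum_{n\in\N}\frac{1}{n^4\binom{2n}{n}}=C(4)$; while $\frac{\Li_3(x/(1+x)^2)}{2x}=\frac12\sum_{n\in\N}\frac{x^{n-1}}{n^3(1+x)^{2n}}$ and the second form of the Beta integral in \eqref{Beta Function Definitions} with $u=v=n$ gives $\int_0^\infty \frac{x^{n-1}}{(1+x)^{2n}}\,dx=\Beta(n,n)=\frac{2}{n\binom{2n}{n}}$, the $2$ cancelling the $\frac12$ to leave $C(4)$ once more.

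The arguments are essentially bookkeeping, so the only step warranting real care is the justification of the interchanges of summation and integration in the last two identities. There I would note that the polylogarithm arguments stay bounded away from $1$, since $0\le x-x^2\le\frac14$ on $[0,1]$ and $0\le \frac{x}{(1+x)^2}\le\frac14$ on $[0,\infty)$ by the arithmetic--geometric mean inequality, so each $\Li_3$ series converges uniformly on the domain of integration; moreover the integrands have finite limits at $x=0$, are controlled near $x=1$ by the factor $(1-x)^n$ with $n\ge 1$, and decay like $x^{-2}$ as $x\to\infty$, so dominated convergence applies exactly as it did for $I$ and $K$ earlier.
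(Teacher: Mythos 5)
Your proposal is correct, and for four of the five identities it follows the paper's own route: citing \eqref{I COV} and \eqref{K COV} for the first $B(4)$ and first $C(4)$ representations (you rightly read the $\log(x)$ in the stated theorem as the $\log^2(x)$ of \eqref{K COV}, which is what the paper intends), the same integration by parts with $u=\log^2(x)$, $dv=\frac{dx}{2\sqrt{1-x^2/4}}$ for the second $B(4)$ form, and the same series-plus-Beta computation for $\int_0^1 \Li_3(x-x^2)\,x^{-1}\,dx$ --- where your version, using $\int_0^1 x^{n-1}(1-x)^n\,dx=\Beta(n,n+1)=\tfrac12\Beta(n,n)=\tfrac{1}{n\binom{2n}{n}}$, is actually more careful than the paper's displayed lines, which carry a harmless exponent slip. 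The one genuine divergence is the last representation: the paper obtains $\int_0^\infty \frac{\Li_3\left(x/(1+x)^2\right)}{2x}\,dx$ from the second integral by the substitution $x=t/(1+t)$, which as stated produces $\int_0^\infty \frac{\Li_3\left(t/(1+t)^2\right)}{t(1+t)}\,dt$ and still needs the $t\mapsto 1/t$ symmetrization (as in the $K_2$ evaluation) to trade $\frac{1}{t(1+t)}$ for $\frac{1}{2t}$; you instead expand $\Li_3$ directly and invoke the second Beta integral in \eqref{Beta Function Definitions}, $\int_0^\infty \frac{x^{n-1}}{(1+x)^{2n}}\,dx=\Beta(n,n)$, which lands on $C(4)$ in one step and quietly sidesteps that omitted intermediate argument. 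Both routes are sound; yours is self-contained for that identity, while the paper's emphasizes that the third representation is just a change of variables away from the second. Your remarks on uniform convergence (arguments bounded by $\tfrac14$) and domination justify the interchanges adequately.
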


\begin{proof}
We already showed that substituting $x=2\sin(t)$ into the integral $I$ from \eqref{I Original Definition} recovers the first integral in \eqref{B(4) Integral Representations}. Integrating by parts using $u=\log^2(x)$ and $dv=\frac{dx}{2\sqrt{1-\frac{x^2}{4}}}$ recovers the second integral in \eqref{B(4) Integral Representations}.

We already showed that substituting $x=(u+v)/2, \  y=(u-v)/2$ into the double integral $K$ from \eqref{K Original Definition} recovers the first integral and integrating with respect to $v$ yields the first integral in \eqref{C(4) Integral Representations}.
Using the definition of $\Li_3(z),$ we see the second integral in \eqref{C(4) Integral Representations} is  
\begin{align*}
\int_{0}^{1} \frac{\Li_3(x-x^2)}{x} \ dx  &= \int_{0}^{1} \sum_{n \in \N} \frac{x^n(1-x)^n}{n^3} \ dx  \\
&= \sum_{n \in \N} \int_{0}^{1} \frac{x^{n-1}(1-x)^{n-1}}{n^3} \ dx  \\
&=\sum_{n \in \N} \frac{\Beta(n,n)}{n^3} \\
&= \sum_{n \in \N} \frac{1}{n^4\binom{2n}n}=C(4).
\end{align*}
Then substitute $x=\frac{t}{1+t}$ in the second integral to get the third integral in \eqref{C(4) Integral Representations}. 
\end{proof}

The following integrals require the knowledge of $B(4).$

\begin{theorem}
We have 
\begin{align}
\int_0^1 \frac{x \sin^{-1} \left(\frac{x}{2} \right) \log(x) }{x^2-1} \ dx &= \frac{5 \pi^3}{1296} \label{Challenging Integral 1} \\
\int_0^1 \frac{x \cos^{-1} \left(\frac{x}{2} \right) \log(x) }{x^2-1} \ dx &= \frac{11 \pi^3}{648} \label{Challenging Integral 2}.
\end{align}
\end{theorem}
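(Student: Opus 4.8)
The plan is to establish \eqref{Challenging Integral 1} first and then deduce \eqref{Challenging Integral 2} from it. Since $\cos^{-1}(x/2)=\tfrac{\pi}{2}-\sin^{-1}(x/2)$, linearity gives
\[
\int_0^1 \frac{x\cos^{-1}(x/2)\log x}{x^2-1}\,dx=\frac{\pi}{2}\int_0^1 \frac{x\log x}{x^2-1}\,dx-\int_0^1 \frac{x\sin^{-1}(x/2)\log x}{x^2-1}\,dx,
\]
and the elementary integral is handled by the substitution $u=x^2$ and a geometric series, $\int_0^1 \tfrac{x\log x}{x^2-1}\,dx=\tfrac14\int_0^1\tfrac{\log u}{u-1}\,du=\tfrac14\zeta(2)=\tfrac{\pi^2}{24}$, using \eqref{Zeta(2) Closed Form}. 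Writing $T$ for the left side of \eqref{Challenging Integral 1}, once $T=\tfrac{5\pi^3}{1296}$ is proved, \eqref{Challenging Integral 2} will read $\tfrac{\pi}{2}\cdot\tfrac{\pi^2}{24}-\tfrac{5\pi^3}{1296}=\tfrac{11\pi^3}{648}$.

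\noindent\textbf{Turning $T$ into a log--sine integral.} I would integrate by parts in $T$ with $u=\sin^{-1}(x/2)$ and $dv=\tfrac{x\log x}{x^2-1}\,dx$: differentiation via \eqref{Polylogarithmic Differentiation Formula} confirms that $-\tfrac14\Li_2(1-x^2)$ is an antiderivative of $\tfrac{x\log x}{x^2-1}$, and the boundary terms vanish at $x=0,1$, leaving $T=\tfrac14\int_0^1\tfrac{\Li_2(1-x^2)}{\sqrt{4-x^2}}\,dx$. Substituting $x=2\sin\theta$ gives $T=\tfrac14\int_0^{\pi/6}\Li_2(1-4\sin^2\theta)\,d\theta$. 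On $(0,\pi/6)$ one has $1-4\sin^2\theta=2\cos 2\theta-1\in(0,1)$, so the dilogarithm reflection formula $\Li_2(w)+\Li_2(1-w)=\zeta(2)-\log w\log(1-w)$ (the identity implicit in the proof of \eqref{Li_2(1/2) Closed Form}) applies with $w=4\sin^2\theta$ and $\log w=2\log(2\sin\theta)$, yielding
\[
T=\frac{\pi^3}{144}-\frac12\int_0^{\pi/6}\log(2\sin\theta)\,\log(1-4\sin^2\theta)\,d\theta-\frac14\int_0^{\pi/6}\Li_2(4\sin^2\theta)\,d\theta.
\]
(Equivalently, substituting $x=2\sin\theta$ directly in $T$ gives $\int_0^{\pi/6}\tfrac{2\theta\sin 2\theta\,\log(2\sin\theta)}{1-2\cos 2\theta}\,d\theta$, and integrating by parts via $\tfrac{\sin 2\theta}{1-2\cos 2\theta}\,d\theta=\tfrac14\,d\!\left[\log(2\cos 2\theta-1)\right]$ leads to the same family of auxiliary integrals.)

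\noindent\textbf{Where $B(4)$ enters.} The triple-angle identity $\cos 3\theta=\cos\theta\,(4\cos^2\theta-3)$ gives $1-4\sin^2\theta=\tfrac{\cos 3\theta}{\cos\theta}=\tfrac{\sin\theta\,\sin 6\theta}{\sin 2\theta\,\sin 3\theta}$, hence on $(0,\pi/6)$
\[
\log(1-4\sin^2\theta)=\log(2\sin\theta)+\log(2\sin 6\theta)-\log(2\sin 2\theta)-\log(2\sin 3\theta).
\]
Inserting this into the first auxiliary integral above splits it as $\int_0^{\pi/6}\log^2(2\sin\theta)\,d\theta$ together with three integrals $\int_0^{\pi/6}\log(2\sin\theta)\log(2\sin k\theta)\,d\theta$, $k\in\{2,3,6\}$. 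The leading term is exactly the integral $I$ of \eqref{B(4) Log-Sine Integral}, which the paper has already shown equals $B(4)=\tfrac{7\pi^3}{216}$ (see \eqref{B(4) Identity}); this is the advertised surprising appearance of an integral representation of $B(4)$.

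\noindent\textbf{The auxiliary integrals, and the main obstacle.} What remains is to evaluate $\int_0^{\pi/6}\log(2\sin\theta)\log(2\sin k\theta)\,d\theta$ for $k=2,3,6$ and $\int_0^{\pi/6}\Li_2(4\sin^2\theta)\,d\theta$. I would do this by inserting the Fourier expansion $\log(2\sin\psi)=-\sum_{n\ge1}\tfrac{\cos 2n\psi}{n}$ (valid on $(0,\pi)$; this is $\Re$ of $-\Li_1(e^{2i\psi})$ via \textbf{Theorem~\ref{Log-Sine Expansion}} and \eqref{Li_1(z) Closed Form}) and the defining series of $\Li_2$, integrating term by term and resumming; the inner sums collapse to $\arctan$-values and to Clausen sine values $S_1$ at rational multiples of $\pi$ (elementary by \textbf{Theorem~\ref{Clausen Closed Formulas}}). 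The hard part—laborious in the spirit of the $I_3$ and $K_3$ evaluations—is that the individual auxiliary integrals are \emph{not} rational multiples of $\pi^3$: separately they involve $\log 3$ and the Clausen constant $\mathrm{Cl}_2(\pi/3)=\Im\Li_2(e^{i\pi/3})$ (equivalently $\mathrm{Ti}_2(1/\sqrt3)$), which do not occur in the final answer. One must therefore carry the complete linear combination so these transcendental contributions cancel, or encode the whole reduction as a single long antiderivative identity in the style of \eqref{Long Antiderivative}. After the cancellations and the substitution $B(4)=\tfrac{7\pi^3}{216}$, the three contributions collapse to $T=\tfrac{5\pi^3}{1296}$, which is \eqref{Challenging Integral 1}; by the first paragraph, \eqref{Challenging Integral 2} $=\tfrac{\pi^3}{48}-\tfrac{5\pi^3}{1296}=\tfrac{11\pi^3}{648}$.
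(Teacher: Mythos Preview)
Your deduction of \eqref{Challenging Integral 2} from \eqref{Challenging Integral 1} via $\cos^{-1}(x/2)=\tfrac{\pi}{2}-\sin^{-1}(x/2)$ and the elementary evaluation $\int_0^1\tfrac{x\log x}{x^2-1}\,dx=\tfrac{\pi^2}{24}$ is exactly what the paper does.

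For \eqref{Challenging Integral 1} itself your route is genuinely different from the paper's, and it carries a real gap. The paper never touches log--sine products or the reflection formula: it introduces the auxiliary triple integral
\[
\int_0^1\!\!\int_0^1\!\!\int_0^1 \frac{x^2 y}{\sqrt{4-x^2}\,\sqrt{4-x^2y^2}\,\sqrt{4-x^2y^2z^2}}\,dz\,dy\,dx,
\]
evaluates it once in the written order to $\pi^3/1296$, and once with $y$ innermost followed by an integration by parts and a partial-fraction split, obtaining $\tfrac{\pi^3}{48}-\tfrac{B(4)}{2}-T$. Equating gives $T=\tfrac{5\pi^3}{1296}$ immediately, with no Clausen values, no $\log 3$, and no cancellations to track. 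Your reduction of $T$ to $\tfrac14\int_0^{\pi/6}\Li_2(1-4\sin^2\theta)\,d\theta$ and the subsequent reflection/triple-angle decomposition are all correct, and the appearance of $I=B(4)$ is genuine; but the piece you label ``the main obstacle'' --- the evaluation of the three cross integrals $\int_0^{\pi/6}\log(2\sin\theta)\log(2\sin k\theta)\,d\theta$ for $k\in\{2,3,6\}$ together with $\int_0^{\pi/6}\Li_2(4\sin^2\theta)\,d\theta$, and the demonstration that the $\log 3$ and $\mathrm{Cl}_2(\pi/3)$ contributions cancel --- is asserted, not carried out. That is the missing step in your argument. The paper's triple-integral device is precisely what buys you a complete proof without ever having to confront that cancellation.
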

\begin{proof}
We will calculate the triple integral 
\begin{align}\label{Challenging Integrals 1 and 2 Auxiliary Triple Integral}
\int_{0}^{1} \int_{0}^{1} \int_{0}^1 \frac{x^2y}{\sqrt{4-x^2}\sqrt{4-x^2y^2}\sqrt{4-x^2y^2z^2}} \ dz \ dy \ dx
\end{align}
in two ways. 

Integrating \eqref{Challenging Integrals 1 and 2 Auxiliary Triple Integral} directly, we see it is equal to 
\begin{align}
\int_{0}^{1} \int_{0}^{1} \frac{\sin^{-1} \left(\frac{xy}{2} \right)}{\sqrt{4-x^2}\sqrt{4-x^2y^2}} \ dy \ dx &= \int_{0}^{1} \frac{\left( \sin^{-1} \left(\frac{x}{2} \right) \right)^2}{2\sqrt{4-x^2}}  \ dx \nonumber  \\
&=\frac{\pi^3}{1296} \nonumber .
\end{align}

On the other hand, we reverse the order of integration in \eqref{Challenging Integrals 1 and 2 Auxiliary Triple Integral} and integrate with respect to $y$ first. We have 
\begin{align}
\int_{0}^{1} \int_{0}^{1} \int_{0}^1 \frac{x^2y}{\sqrt{4-x^2}\sqrt{4-x^2y^2}\sqrt{4-x^2y^2z^2}} \ dy \ dz \ dx &= \int_{0}^1 \int_0^1 -\frac{\log \left(\frac{\sqrt{4-x^2
   z^2}+z\sqrt{4-x^2} }{2
   +2z}\right)}{z \sqrt{4-x^2} } \ dz \ dx. \label{Challenging Integrals 1 and 2 Auxiliary Triple Integral Reversed}
\end{align} 
We carry out the inner integral on the right hand side of \eqref{Challenging Integrals 1 and 2 Auxiliary Triple Integral Reversed} using integration by parts. We let $u=-\log \left(\frac{\sqrt{4-x^2
   z^2}+z\sqrt{4-x^2} }{2+2
   z}\right)$ and $dv = \frac{dz}{z\sqrt{4-x^2}}.$ Then $v=\frac{\log(z)}{\sqrt{4-x^2}}$ and upon differentiating $u$ with respect to $z,$ we get
\begin{align}
du &= \frac{4+x^2 z-\sqrt{4-x^2} \sqrt{4-x^2
   z^2}}{(z+1) \sqrt{4-x^2
   z^2} \left(\sqrt{4-x^2
   z^2}+z\sqrt{4-x^2} \right)} \ dz \label{du complicated} \\
   &=\frac{1-\frac{\sqrt{4-x^2}}{\sqrt{4-x^2z^2}}}{1-z^2} \ dz \label{du simplified},
\end{align} 
where \eqref{du simplified} follows from multiplying top and bottom of \eqref{du complicated} by the quantity $\sqrt{4-x^2
   z^2}-z\sqrt{4-x^2} .$
As a result, 
\eqref{Challenging Integrals 1 and 2 Auxiliary Triple Integral Reversed} becomes 
\begin{align}
\int_{0}^{1} \int_{0}^{1} \int_{0}^1 \frac{x^2y}{\sqrt{4-x^2}\sqrt{4-x^2y^2}\sqrt{4-x^2y^2z^2}} \ dy \ dz \ dx &= \int_0^1 \int_0^1 \frac{\left(\frac{1}{\sqrt{4-x^2}}-\frac{1}{\sqrt{4-x^2 z^2}}\right)
   \log (z)}{z^2-1} \ dz \ dx \nonumber \\
&= \int_0^1 \int_0^1 \frac{\left(\frac{1}{\sqrt{4-x^2}}-\frac{1}{\sqrt{4-x^2 z^2}}\right)
   \log (z)}{z^2-1} \ dx \ dz \nonumber \\
&= \int_0^1 \frac{ \left(\pi  z-6 \sin
   ^{-1}\left(\frac{z}{2}\right)\right)\log (z)}{6 z \left(z^2-1\right)} \ dz \nonumber \\
   &= \int_{0}^1 \frac{\pi  \log (z)}{6
   \left(z^2-1\right)} \ dz - \int_{0}^1\frac{\log
   (z) \sin
   ^{-1}\left(\frac{z}{2}\right)}{z
   \left(z^2-1\right)} \ dz \label{Challenging Integrals 1 and 2 Auxiliary Triple Integral Split Apart}\\
   \begin{split}
   &=\int_{0}^1 \frac{\pi  \log (z)}{6
   \left(z^2-1\right)} \ dz + \int_{0}^1\frac{\log
   (z) \sin
   ^{-1}\left(\frac{z}{2}\right)}{z} \ dz \\
   & \qquad - \int_{0}^1 \frac{z \sin^{-1} \left(\frac{z}{2} \right) \log(z)}{z^2-1} \ dz
   \end{split}
    \label{Challenging Integrals 1 and 2 Auxiliary Triple Integral Partial Fractions} \\
   &= \int_{0}^1 \frac{\pi  \log (z)}{6
   \left(z^2-1\right)} \ dz -\frac{B(4)}2 - \int_{0}^1 \frac{z \sin^{-1} \left(\frac{z}{2} \right) \log(z)}{z^2-1} \ dz,
   \label{Challenging Integrals 1 and 2 Auxiliary Triple Integral Simplified}
\end{align}
where \eqref{Challenging Integrals 1 and 2 Auxiliary Triple Integral Partial Fractions} follows from using partial fractions in the second integral term of \eqref{Challenging Integrals 1 and 2 Auxiliary Triple Integral Split Apart}.  

Now, the first integral term of \eqref{Challenging Integrals 1 and 2 Auxiliary Triple Integral Simplified} is
\begin{align*}
\int_{0}^1 \frac{\pi  \log (z)}{6
   \left(z^2-1\right)} \ dz &= - \int_{0}^{1} \sum_{n \geq 0} \frac{\pi}{6} z^{2n} \log(z) \ dz \\
   &= -  \sum_{n \geq 0} \int_{0}^{1} \frac{\pi}{6} z^{2n} \log(z) \ dz \\
   &=\sum_{n \geq 0} \frac{\pi}{6(2n+1)^2} \\
   &=\frac{\pi}{6} \left(\frac{\zeta(2)-\eta(2)}{2} \right)=\frac{\pi^3}{48}.
\end{align*}

Equating the two representations we obtained for \eqref{Challenging Integrals 1 and 2 Auxiliary Triple Integral} and rearranging terms, we get that 
\begin{align*}
\int_{0}^1 \frac{z \sin^{-1} \left(\frac{z}{2} \right) \log(z)}{z^2-1} \ dz &= \frac{\pi^3}{48} -\frac{\pi^3}{1296}-\frac{B(4)}{2} \\
&= \frac{\pi^3}{48} -\frac{\pi^3}{1296}- \frac{7 \pi^3}{432} =\frac{5 \pi^3}{1296},
\end{align*}
which proves \eqref{Challenging Integral 1}.

Recalling $\sin^{-1}(z/2)+\cos^{-1}(z/2)=\pi/2,$ we have
\begin{align}
    \int_{0}^1 \frac{z \cos^{-1} \left(\frac{z}{2} \right) \log(z)}{z^2-1} \ dz &= \int_{0}^1 \frac{\frac{\pi}{2} z \log(z)}{z^2-1} \ dz - \int_{0}^1 \frac{z \sin^{-1} \left(\frac{z}{2} \right) \log(z)}{z^2-1} \ dz \label{Challenging Integral 2 Additivity} \\
    &= \int_{0}^1 \frac{\frac{\pi}{2} \log(u)}{4(u-1)} \ du - \int_{0}^1 \frac{z \sin^{-1} \left(\frac{z}{2} \right) \log(z)}{z^2-1} \ dz \label{Challenging Integral 2 Substitution z=sqrt(u)} \\
    &= \int_{0}^1 \frac{\frac{\pi}{2} \Li(1-u)}{4(1-u)} \ du - \int_{0}^1 \frac{z \sin^{-1} \left(\frac{z}{2} \right) \log(z)}{z^2-1} \ dz \nonumber \\
    &= \frac{\pi^3}{48} - \frac{5 \pi^3}{1296}= \frac{11 \pi^3}{648}\nonumber,
\end{align}
where we obtained the first integral term of \eqref{Challenging Integral 2 Substitution z=sqrt(u)} by substituting $z=\sqrt{u}$ into the first integral term of \eqref{Challenging Integral 2 Additivity}. This proves \eqref{Challenging Integral 2}.
\end{proof}

The following theorems require us to know $C(4).$

\begin{theorem} 
We have 
\begin{align}\label{Challenging Integral 3}
\int_{0}^{1} \frac{4 \Li_3 \left(-\frac{x}{1+x^2} \right)}{x} \ dx &= -\frac{403 \pi^4}{12960}.
\end{align}
\end{theorem}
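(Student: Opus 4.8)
The plan is to reduce this integral to $K_2$ and to the last integral representation of $C(4)$ in \eqref{C(4) Integral Representations}, using the polylogarithmic negative-argument formula. First I would apply \eqref{Polylogarithmic Negative Argument Formula} with $k=3$ and $z=\frac{x}{1+x^2}$, which gives $\Li_3\left(-\frac{x}{1+x^2}\right)=\frac14\Li_3\left(\frac{x^2}{(1+x^2)^2}\right)-\Li_3\left(\frac{x}{1+x^2}\right)$; note that on $[0,1]$ every polylogarithm argument appearing here lies in $[-\tfrac12,\tfrac12]$, so all the series manipulations below are justified by the Monotone or Dominated Convergence Theorem, and both resulting integrals converge individually. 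Multiplying by $4/x$, integrating over $[0,1]$, and recognizing that the contribution of the second term is exactly $K_2$, I obtain
\begin{align*}
\int_{0}^{1} \frac{4\Li_3\left(-\frac{x}{1+x^2}\right)}{x}\,dx &= \int_{0}^{1} \frac{\Li_3\left(\frac{x^2}{(1+x^2)^2}\right)}{x}\,dx - K_2.
\end{align*}

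Next I would evaluate the remaining integral by the substitution $u=x^2$, which turns it into $\int_0^1 \frac{\Li_3\left(\frac{u}{(1+u)^2}\right)}{2u}\,du$. The integrand $\frac{\Li_3\left(u/(1+u)^2\right)}{2u}$ is invariant under $u\mapsto 1/u$, since $\frac{1/u}{(1+1/u)^2}=\frac{u}{(1+u)^2}$ and $\frac{du}{u}$ picks up a sign that is cancelled by the reversal of limits; hence $\int_1^\infty = \int_0^1$, so this integral equals $\frac12\int_0^\infty \frac{\Li_3\left(u/(1+u)^2\right)}{2u}\,du = \frac{C(4)}{2}$ by the third representation in \eqref{C(4) Integral Representations}. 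Convergence of that improper integral at $u=\infty$ is fine because $\Li_3\left(u/(1+u)^2\right)=O(1/u)$ there, making the integrand $O(1/u^2)$.

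Putting the pieces together and recalling $K_2=\frac{7\pi^4}{216}+\frac{C(4)}{4}$ gives
\begin{align*}
\int_{0}^{1} \frac{4\Li_3\left(-\frac{x}{1+x^2}\right)}{x}\,dx &= \frac{C(4)}{2} - \frac{7\pi^4}{216} - \frac{C(4)}{4} = \frac{C(4)}{4} - \frac{7\pi^4}{216}.
\end{align*}
Substituting $C(4)=\frac{17\pi^4}{3240}$ and putting everything over a common denominator of $12960$ yields $\frac{17\pi^4}{12960}-\frac{420\pi^4}{12960}=-\frac{403\pi^4}{12960}$, as claimed. The computation is short once these reductions are spotted; the only genuine care needed is in justifying the term-by-term steps and in confirming the $u\mapsto 1/u$ symmetry that halves the improper integral, so I do not anticipate a real obstacle here — the whole point of the identity is that its evaluation is forced to pass through the already-established value of $C(4)$ (and, via $K_2$, of $B(4)$).
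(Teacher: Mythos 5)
Your proposal is correct and follows essentially the same route as the paper: both split $\Li_3\left(-\frac{x}{1+x^2}\right)$ via the negative-argument formula \eqref{Polylogarithmic Negative Argument Formula}, identify one piece with $K_2$ and the other with the representation $C(4)=\int_0^\infty \frac{\Li_3\left(x/(1+x)^2\right)}{2x}\,dx$ using the $u\mapsto 1/u$ symmetry, and conclude $\frac{C(4)}{2}-K_2=-\frac{403\pi^4}{12960}$. The only (harmless) difference is ordering: the paper first extends the integral to $(0,\infty)$ and then applies the negative-argument formula, whereas you split first on $[0,1]$ and invoke the symmetry only for the $C(4)$ piece.
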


\begin{proof}
Mimicking the same exact argument we used to obtain the alternative integral representation of $K_2$ from \eqref{K_3 Original Definition}, namely \eqref{K_2 COV},  we get 
\begin{align}
\int_{0}^{1} \frac{4 \Li_3 \left(-\frac{x}{1+x^2} \right)}{x} \ dx  &= \int_{0}^{\infty} \frac{\Li_3 \left(-\frac{u}{1+u} \right)}{u} \ du \nonumber \\
&=\int_{0}^{\infty} \frac{\Li_3 \left(\frac{u}{(1+u)^2} \right)}{4u} \ du-\int_{0}^{\infty} \frac{ \Li_3 \left(\frac{u}{(1+u)^2} \right)}{u} \ du \label{Challenging Integral 3 Li_3(-x/1+x^2) Formula}\\
&= \frac{C(4)}{2} - K_2 \nonumber \\
&= - \pi B(4) + \frac{C(4)}{4} \nonumber \\
&= -\frac{7 \pi^4}{216} + \frac{17 \pi^4}{12960}=-\frac{403 \pi^4}{12960} \nonumber ,
\end{align}
where \eqref{Challenging Integral 3 Li_3(-x/1+x^2) Formula} follows from substituting $z=\frac{u}{1+u}$ into the formula for $\Li_3(-z)$ from \eqref{Polylogarithmic Negative Argument Formula}.
\end{proof}

\begin{theorem}
We have
\begin{align}\label{Challenging Integral 4}
\int_0^1 \frac{\log^2(1-x)\log(1-x+x^2)+ 2\log(1-x)\Li_2(x-x^2)}{x} \ dx &= -\frac{2 \pi^4}{243}.
\end{align}
\end{theorem}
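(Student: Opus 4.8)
The plan is to recognize the integrand as essentially the derivative (in the polylogarithmic-antiderivative sense) of a combination that telescopes against the integral representations of $C(4)$ already established. First I would note that $\log(1-x+x^2) = \log\!\big(\tfrac{1+x^3}{1+x}\big) = \log(1+x^3)-\log(1+x)$, which suggests rewriting the first term; but a cleaner route is to observe that $1-x+x^2 = 1-(x-x^2)$, so $\log(1-x+x^2) = \log(1-(x-x^2)) = -\Li_1(x-x^2)$. Writing $w = x-x^2$ and also $\log(1-x) = -\Li_1(x)$, the numerator becomes $-\log^2(1-x)\,\Li_1(w) - 2\,\Li_1(x)\,\Li_2(w) \cdot(-1)\cdot(-1)$; I need to be careful with signs, but the point is that the whole integrand is built from $\Li_1(w)$, $\Li_2(w)$ and $\log(1-x)$, and the antiderivative formula \eqref{log^2(1-z)/z Antiderivative Formula} (with $z$ replaced by $w=x-x^2$, using $dw/w = (1-2x)/(x-x^2)\,dx$) is exactly the tool for integrating $\log^2(1-w)/w$-type expressions.

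Concretely, I would try to show that the numerator equals $\tfrac{d}{dx}\big[\text{(something)}\big]$ minus a multiple of $\Li_3(x-x^2)/x$, so that after integration the $\Li_3(x-x^2)/x$ piece contributes exactly $C(4)$ via the second representation in \eqref{C(4) Integral Representations}. The most promising concrete manipulation: differentiate $\log(1-x)\cdot\Li_2(x-x^2)$ or $\log^2(1-x)\cdot\log(1-x+x^2)$ by the product and chain rules, collect the resulting terms, and match. Differentiating $\Li_2(x-x^2)$ gives $-\log(1-x+x^2)(1-2x)/(x-x^2)$ times $\ldots$ — so terms of the shape $\log(1-x)\log(1-x+x^2)/x$ and $\log(1-x)\log(1-x+x^2)/(1-x)$ appear, and the $1/(1-x)$ ones need to cancel against the derivative of $\log^2(1-x)\log(1-x+x^2)$. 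After assembling the exact antiderivative $\Phi(x)$ (in terms of $\log(1-x)$, $\log(1-x+x^2)$, and $\Li_k(x-x^2)$, $k=2,3,4$, possibly with $\Li_4$), I would evaluate $\Phi(1) - \Phi(0)$, where the boundary values at $x\to 1$ and $x=0$ involve $\Li_k(0)=0$ and elementary limits, leaving $\pm C(4)$ plus rational multiples of $\pi^4$ (using $\zeta(4)=\pi^4/90$ and $\Li_2(0)=0$), and then substitute $C(4)=17\pi^4/3240$ to obtain $-2\pi^4/243$.

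The main obstacle will be pinning down the exact antiderivative so that all the spurious $1/(1-x)$ and $\log(1-x+x^2)^2$ cross-terms cancel; this is where a careful bookkeeping of the chain-rule factor $(1-2x)/(x-x^2) = 1/x - 1/(1-x)$ is essential, since that partial-fraction split is what funnels one half of each term into a ``$1/x$'' integral (which becomes $C(4)$ or a $\zeta(4)$-type sum after expanding $\Li_k(x-x^2)=\sum_n (x-x^2)^n/n^k$ and using $\Beta(n,n)=2/(n\binom{2n}{n})$) and the other half into a ``$1/(1-x)$'' integral that cancels or is handled by the substitution $x\mapsto 1-x$ (under which $x-x^2$ is invariant). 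Given the phrasing of the surrounding results, I expect the cleanest presentation is: (i) substitute to exhibit the integral as $\int_0^1 \big(\log^2(1-w)\Li_1(w)\cdot(\text{stuff}) + \ldots\big)$; (ii) split via $1/(x-x^2)=1/x+1/(1-x)$; (iii) on the $1/x$ part expand in the $\Li_3(x-x^2)/x$ series to get a multiple of $C(4)$ together with a $\Li_4(x-x^2)/x$ or $\log$-power integral evaluating to a rational multiple of $\pi^4$; (iv) on the $1/(1-x)$ part use $x\mapsto 1-x$ to fold it back; (v) collect and insert $C(4)=17\pi^4/3240$. I would double-check the final arithmetic by noting $-2\pi^4/243 = -\tfrac{2}{243}\pi^4$ and that $17/3240$, $7/216$, and the various $\zeta(4)$-coefficients must combine consistently, exactly as in the $K = K_1+K_2+K_3$ computation above where the $C(4)/4$ self-reference was resolved.
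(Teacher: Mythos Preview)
Your proposal is a plan, not a proof, and the central step is left as an admitted obstacle. You write ``I would try to show that the numerator equals $\tfrac{d}{dx}[\text{something}]$ minus a multiple of $\Li_3(x-x^2)/x$'' and later ``The main obstacle will be pinning down the exact antiderivative''; but that antiderivative \emph{is} the proof, and you never produce it. There is no reason to expect a closed-form $\Phi(x)$ built only from $\log(1-x)$, $\log(1-x+x^2)$, and $\Li_k(x-x^2)$: when you differentiate $\log(1-x)\Li_2(x-x^2)$ the chain-rule factor $(1-2x)/(x-x^2)=1/x-1/(1-x)$ generates a $\log(1-x)\log(1-x+x^2)/(1-x)$ term that has nothing to cancel against, and your proposed cure $x\mapsto 1-x$ sends $\log(1-x)\mapsto\log(x)$, so it does not fold that piece back into the original integrand. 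The five-step outline at the end is therefore aspirational rather than an argument.

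The paper avoids the antiderivative hunt entirely by introducing the auxiliary double integral
\[
\int_0^1\!\!\int_0^{1-x}\frac{\log^2(x)}{1-xy}\,dy\,dx
\]
and evaluating it two ways. Integrating in $y$ first gives $\int_0^1 -\frac{\log^2(x)\log(1-x+x^2)}{x}\,dx$; here your observation $\log(1-x+x^2)=\log(1+x^3)-\log(1+x)$ \emph{is} exactly what is used, and two power substitutions reduce this to a multiple of $K_1$, hence $91\pi^4/4860$. Reversing the order of integration and substituting $x=(1-t)/y$ in the inner integral, then integrating in $t$ with the antiderivative \eqref{log^2(1-z)/z Antiderivative Formula} and the polylog recursion, produces the \emph{negative} of the target integral plus $2\int_0^1 \Li_3(y-y^2)/y\,dy = 2C(4)$. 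Equating the two evaluations and inserting $C(4)=17\pi^4/3240$ yields $-2\pi^4/243$. The double integral does all of the bookkeeping that your $\Phi$ was supposed to do.
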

\begin{proof}
We will calculate the double integral 
\begin{align}\label{Challenging Auxiliary Double Integral 4}
\int_{0}^1 \int_{0}^{1-x} \frac{\log^2(x)}{1-xy} \ dy \ dx
\end{align}
in two ways.

Carrying out the integration with respect to $y,$ we see \eqref{Challenging Auxiliary Double Integral 4} becomes 
\begin{align}
\int_{0}^1 \int_{0}^{1-x} \frac{\log^2(x)}{1-xy} \ dy \ dx &=\int_{0}^1  -\frac{\log^2(x)\log(1-x+x^2)}{x} \ dx \nonumber \\
&= \int_{0}^1  -\frac{\log^2(x)\log(1+x^3)}{x} \ dx + \int_{0}^1  \frac{\log^2(x)\log(1+x)}{x} \ dx \label{Challenging Auxiliary Double Integral 4 Split} \\
&= \int_{0}^1  -\frac{8\log^2(u)\log(1+u^2)}{27u} \ du + \int_{0}^1  \frac{8\log^2(u)\log(1+u^2)}{u} \ du \label{Challenging Auxiliary Double Integral 4 Substitutions} \\
&= -\frac{208}{54} K_1=\frac{91 \pi^4}{4860}, \nonumber
\end{align}
where the first integral in \eqref{Challenging Auxiliary Double Integral 4 Substitutions} follows from substituting $x=u^{2/3}$ in the first integral on the right hand side of \eqref{Challenging Auxiliary Double Integral 4 Split} and the second integral in \eqref{Challenging Auxiliary Double Integral 4 Substitutions} follows from substituting $x=u^{1/3}$ in the second integral on the right hand side of \eqref{Challenging Auxiliary Double Integral 4 Split}.

On the other hand, reversing the order of integration in \eqref{Challenging Auxiliary Double Integral 4}, we get \eqref{Challenging Auxiliary Double Integral 4} is equal to 
\begin{align}
\int_{0}^1 \int_{0}^{1-y} \frac{\log^2(x)}{1-xy} \ dx \ dy 
&= \int_{0}^1 \int_{1}^{1-y+y^2} \frac{\log^2\left(\frac{1-t}{y} \right)}{ty} \ dt \ dy \label{Challenging Auxiliary Double Integral 4 Substitution x=(1-t)/y} \\
&= \int_{0}^1 \int_{1}^{1-y+y^2} \frac{\log^2(1-t)-2\log(1-t)\log(y)+\log^2(y)}{ty} \ dt \ dy \label{Challenging Auxiliary Double Integral 4 Inner Integral} \\
&=\int_0^1 \frac{-\log^2(1-y)\log(1-y+y^2)- 2\log(1-y)\Li_2(y-y^2)+2 \Li_3(y-y^2)}{y} \ dy \label{Challenging Auxiliary Double Integral 4 Simplified}  \\
&= \int_0^1 \frac{-\log^2(1-y)\log(1-y+y^2)- 2\log(1-y)\Li_2(y-y^2)}{y} \ dy +2C(4), \nonumber 
\end{align}
where we made the substitution $x=(1-t)/y$ into the left hand side of \eqref{Challenging Auxiliary Double Integral 4 Substitution x=(1-t)/y} to get the right hand side \eqref{Challenging Auxiliary Double Integral 4 Substitution x=(1-t)/y}, and \eqref{Challenging Auxiliary Double Integral 4 Simplified} follows from evaluating the inner integral of \eqref{Challenging Auxiliary Double Integral 4 Inner Integral} with respect to $t$ using \eqref{Polylogarithmic Integration Formula} and \eqref{log^2(1-z)/z Antiderivative Formula}.

Hence, equating these two representations of the double integral \eqref{Challenging Auxiliary Double Integral 4} and rearranging terms, we get
\begin{align*}
\int_0^1 \frac{\log^2(1-y)\log(1-y+y^2)+ 2\log(1-y)\Li_2(y-y^2)}{y} \ dy &= 2C(4)-\frac{91 \pi^4}{4860} \\
&=\frac{17 \pi^4}{1620}-\frac{91 \pi^4}{4860}=-\frac{2 \pi^4}{243}.
\end{align*}
\end{proof}

\section{A Remark About Apostol's $\zeta(2)$ Proof}
We conclude this paper by highlighting an insight in Apostol's double integration proof \cite{TA} of $\zeta(2)=\pi^2/6$ and comment on its extension to showing $\zeta(4)=\pi^4/90.$

Apostol uses the double integral
\begin{align}\label{Apostol Double Integral}
L_2 &=\int_0^1 \int_0^1 \frac{1}{1-xy} \ dy \ dx.
\end{align}
On one hand, converting the integrand into a geometric and interchanging sum and double integration shows that $L_2$ is equal to the series $\zeta(2).$ On the other hand, making the change of variables $x=(u+v)/2, \ y=(u-v)/2$ shows that \eqref{Apostol Double Integral} becomes
\begin{align}
    L_2 &= \int_{0<u+v,u-v<2} \frac{2}{4-u^2+v^2} \ dv \ du \nonumber \\
    &=\int_0^1 \int_{-u}^u \frac{2}{4-u^2+v^2} \ dv \ du + \int_1^2 \int_{u-2}^{2-u}\frac{2}{4-u^2+v^2} \ dv \ du \nonumber \\
    &=\int_0^1 \frac{4 \tan^{-1} \left( \frac{u}{\sqrt{4-u^2}}\right)}{\sqrt{4-u^2}} \ du + \int_1^2 \frac{4 \tan^{-1} \left( \frac{2-u}{\sqrt{4-u^2}}\right)}{\sqrt{4-u^2}} \ du  \nonumber \\
    &=\int_0^1 \frac{4 \sin^{-1} \left( \frac{u}{2}\right)}{\sqrt{4-u^2}} \ du + \int_1^2 \frac{4 \tan^{-1} \left( \frac{2-u}{\sqrt{4-u^2}}\right)}{\sqrt{4-u^2}} \ du  \label{Apostol Double Integral Arctangent Integrals}\\
    &= \frac{\pi^2}{18} + \frac{\pi^2}{9}=\frac{\pi^2}{6} \nonumber.
\end{align}

We note that the first integral in \eqref{Apostol Double Integral Arctangent Integrals} is a central binomial series in disguise. Recalling our arcsine binomial series expansion from \eqref{ArcSine Squared Formula}, we can follow the usual steps of rewriting the integrand of the first integral term of \eqref{Apostol Double Integral Arctangent Integrals} using \eqref{ArcSine Squared Formula}, interchanging sum and integral, and integrating term-by-term to obtain the identity
\begin{align*}
    \sum_{n \in \N} \frac{1}{n^2 \binom{2n}n} &= \frac{\pi^2}{18}.
\end{align*}

Something similar happens if we mimic Apostol's method on the quadruple integral
\begin{align}\label{Apostol Zeta(4) Integral}
L_4 &=\int_0^1\int_0^1\int_0^1\int_0^1 \frac{1}{1-xyzw} \ dx \ dy \ dz \ dw.
\end{align}
It can be seen that $L_4=\zeta(4)$ by rewriting the integrand as a geometric and interchanging sum and quadruple integration. On the other hand, we substitute $x=(u+v)/2, y=(u-v)/2, z=t/w$ to see 
\begin{align}
    L_4 &= \int_{0<u+v,u-v<2} \int_{0<t<w<1} \frac{\frac{2}{w}}{4-u^2+v^2} \ dt \ dw \ dv \ du \nonumber \\
    &= \int_{0<u+v,u-v<2} \int_{0<t<w<1} \frac{\frac{2}{w}}{4-u^2+v^2} \ dw \ dt \ dv \ du \nonumber  \\
    &= -\int_{0<u+v,u-v<2} \int_0^1 \frac{2 \log (t)}{4-t u^2+t v^2} \ dt \ dv \ du  \nonumber \\
    &= -\int_{0}^1 \int_0^1 \int_{-u}^u \frac{2 \log (t)}{4-t u^2+t v^2} \ dv \ dt \ du-\int_{1}^2 \int_0^1 \int_{u-2}^{2-u} \frac{2 \log (t)}{4-t u^2+t v^2} \ dv \ dt \ du \nonumber \\
    &= -\int_{0}^1 \int_0^1 \frac{4  \tan ^{-1}\left(\frac{u\sqrt{t} }{\sqrt{4-t u^2}}\right)\log (t)}{\sqrt{t}
   \sqrt{4-u^2t}} \ dt \ du - \int_{1}^2 \int_0^1 \frac{4  \tan ^{-1}\left(\frac{(2-u)\sqrt{t} }{\sqrt{4-t u^2}}\right)\log (t)}{\sqrt{t}\sqrt{4-u^2t}} \ dt \ du  \nonumber \\
   &=- \int_{0}^1 \int_0^1 \frac{4  \sin ^{-1}\left(\frac{u\sqrt{t} }{2} \right)\log (t)}{\sqrt{t}
   \sqrt{4-u^2t}} \ dt \ du - \int_{1}^2 \int_0^1 \frac{4  \tan ^{-1}\left(\frac{(2-u)\sqrt{t} }{\sqrt{4-t u^2}}\right)\log (t)}{\sqrt{t}\sqrt{4-u^2t}} \ dt \ du. \label{Apostol Zeta(4) Arctangent Double Integrals}  \\
   &= \int_{0}^1 \frac{8  \sin ^{-1}\left(\frac{s }{2} \right)\log^2 (s)}{
   \sqrt{4-s^2}} \ ds - \int_{1}^2 \int_0^1 \frac{16  \tan ^{-1}\left(\frac{(2-u)s }{\sqrt{4-s^2 u^2}}\right)\log (s)}{\sqrt{4-u^2s^2}} \ ds \ du \label{Apostol Zeta(4) Arctangent Double Integrals Substitution u=s/sqrt(t) and t=s^2}\\
   &= C(4) - \int_{1}^2 \int_0^1 \frac{16  \tan ^{-1}\left(\frac{(2-u)s }{\sqrt{4-s^2 u^2}}\right)\log (s)}{\sqrt{4-u^2s^2}} \ ds \ du \label{Apostol Zeta(4) Simplified},
\end{align}
where we substituted $u=s/\sqrt{t}$ in the first integral of \eqref{Apostol Double Integral Arctangent Integrals} and carried out the integration with respect to $t$ to get the first integral in \eqref{Apostol Zeta(4) Arctangent Double Integrals Substitution u=s/sqrt(t) and t=s^2} and substituted $t=s^2$ in the second integral of \eqref{Apostol Double Integral Arctangent Integrals} to get the second integral in \eqref{Apostol Zeta(4) Arctangent Double Integrals Substitution u=s/sqrt(t) and t=s^2}.

Thus, if we are to prove $\zeta(4)=\pi^4/90$ this way, we would need to know $C(4)$ and the value of the double integral term in \eqref{Apostol Zeta(4) Simplified}. But neither are elementary integrals to evaluate this time around. If we repeat our proof of $C(4)=17 \pi^4/3240$ without assuming that $\zeta(4)=\pi^4/90$ but still assuming that $B(4)=7 \pi^3/216,$ we will get $$C(4)=\frac{4}{3} \pi  B(4) - \frac{41}{12} \zeta(4)=\frac{7 \pi^4}{162} -\frac{41}{12} \zeta(4) .$$ But we still need to show that
\begin{align*}
    - \int_{1}^2 \int_0^1 \frac{16  \tan ^{-1}\left(\frac{(2-u)s }{\sqrt{4-s^2 u^2}}\right)\log (s)}{\sqrt{4-u^2s^2}} \ ds \ du &= \frac{19\pi^4}{3240}= \frac{19}{36} \zeta(4).
\end{align*}
At this time of writing, we do not know a proof of this without assuming  $C(4)=17 \pi^4/3240$ and $\zeta(4)=\pi^4/90.$

\bibliography{CentralBinomialSeries.bib}
\bibliographystyle{unsrt}
\end{document}